\newtheorem{thm}{Theorem}[section]
\newtheorem{prop}[thm]{Proposition}
\newtheorem{lem}[thm]{Lemma}
\renewcommand{\le}{\leqslant}
\renewcommand{\ge}{\geqslant}
\newcommand{\mcl}{\mathcal}
\newcommand{\E}{\mathbb{E}}
\newcommand{\EE}{\mathbf{E}}
\newcommand{\EEo}{\mathbf{E}^\omega}
\newcommand{\N}{\mathbb{N}}
\renewcommand{\L}{\mathcal{L}}
\newcommand{\LL}{\mathbb{L}}
\newcommand{\1}{\mathbf{1}}
\newcommand{\R}{\mathbb{R}}
\newcommand{\Z}{\mathbb{Z}}
\renewcommand{\P}{\mathbb{P}}
\newcommand{\PP}{\mathbf{P}}
\newcommand{\PPo}{\mathbf{P}^\omega}
\newcommand{\ov}{\overline}
\newcommand{\td}{\tilde}
\newcommand{\eps}{\varepsilon}
\def\d{{\mathrm{d}}}
\newcommand{\var}{\mathbb{V}\mathrm{ar}}
\newcommand{\mfk}{\mathfrak}
\newcommand{\Ah}{A_\mathrm{hom}}
\title[A quantitative CLT for the RWRC]{A quantitative central limit theorem for the random walk among random conductances}
\author{Jean-Christophe Mourrat}
\address{Ecole polytechnique fédérale de Lausanne, institut de mathématiques, station 8, 1015 Lausanne, Switzerland}
\begin{document}
\begin{abstract}
We consider the random walk among random conductances on $\Z^d$. We assume that the conductances are independent, identically distributed and uniformly bounded away from $0$ and infinity. We obtain a quantitative version of the central limit theorem for this random walk, which takes the form of a Berry-Esseen estimate with speed $t^{-1/10}$ for $d \le 2$, and speed $t^{-1/5}$ for $d \ge 3$, up to logarithmic corrections. 


\end{abstract}
\maketitle
\section{introduction}
A classical way to represent a disordered medium is to see it as the result of a random sorting. For a conducting material, one assumes that the local conductivity $A(x)$ at point $x$ (in $\R^d$ or $\Z^d$) is a random variable. Although locally disordered, we think of the medium as having some statistical invariance in space, that is, we assume that the law of the field of conductivities is invariant under translations. 

If one is interested in a space scale that is very large compared to the typical length of the random fluctuations, then these fluctuations should average out and one should be able to replace the random medium by an equivalent \emph{homogenized} medium with a constant conductivity matrix. This problem was already considered from a physicist's point of view by Maxwell \cite{maxwell} and Rayleigh \cite{rayleigh}. It received a satisfactory mathematical treatment for periodic environments in the 70's (see for instance \cite[Chapter 1]{jko}, and references therein), and for random environments with \cite{kozlov1}, \cite{yuri1}, and \cite{papavara1}. For uniformly elliptic and ergodic environments, it was shown that there exists an effective conductivity matrix $\Ah$ such that the solution operator of $\nabla A(\cdot /\eps) \nabla$ converges, as $\eps$ tends to $0$, to the solution operator of the deterministic and homogeneous differential operator $\nabla \cdot \Ah \nabla$.

The operator $\nabla \cdot A \nabla$ defines a diffusion (or a random walk if the space is discrete) in the random medium. The probabilistic counterpart of the convergence of operators described above is the convergence of the rescaled diffusion to a Brownian motion with covariance matrix $2 \Ah$. In the discrete space setting, this central limit theorem has been proved first for the measure averaged over the environment, under increasingly general conditions on the environment by \cite{kun, kipvar, masi}. For non-elliptic i.i.d.\ environments, extending the result to convergence for almost every environment is a major recent achievement, see \cite{sid, berbisk, matpiat, bisk, mat, bardeu, andres}. For continuous space and uniformly elliptic environments, similar results were obtained by \cite{papavara1, osada}. 

Both the analytic and the probabilistic results are asymptotic. There has been some progress in turning the analytic statement into a quantitative one. \cite{yuri} and \cite{cn} prove that for uniformly elliptic environments with sufficient decorrelation, the convergence of operators is polynomial, with an exponent depending on the dimension and on the ellipticity constants. The problem of computing the homogenized matrix $\Ah$ has a similar flavor. Indeed, $\Ah$ is in general expressed as a variational problem over the full space. One must restrict it to a finite region of space for practical computations, and hence the question comes naturally to estimate the discrepancy between the true homogenized matrix and its finite volume approximation. One approach consists in computing the homogenized matrix $\Ah(n)$ associated with a periodization of the medium with periods in~$n\Z^d$. When the space is discrete, \cite{boupia} proved that $|\Ah(n) - \Ah|$ converges to $0$ polynomially fast, with an exponent that depends on the dimension $d \ge 3$ and on the ellipticity constants. Following \cite{yuri}, another approach has been analysed in \cite{go11a,go11b,gm}, that consists, instead of periodizing the medium, in introducing a $0$-order term of magnitude $1/n$ in the auxiliary problem defining the homogenized matrix. This also localizes the problem in a box of size of order $n$, and leads to other approximations of the homogenized matrix. For these approximations, explicit (and in most cases optimal) exponents of polynomial error were obtained, that depend only on the dimension.

On the probabilistic side of the problem, much less results have been obtained. As far as I know, the only exception is \cite{vardecay}, where the auxiliary process of the environment viewed by the particle is studied in discrete space, and assuming that the conductivities are bounded away from $0$. It is shown that the process converges to equilibrium polynomially fast, with an explicit exponent depending only on the dimension. An estimate on the speed of convergence to its limit of the rescaled mean square displacement of the walk is also given.

The aim of this article is to prove a quantitative central limit theorem, in the discrete space setting. We show a Berry-Esseen estimate with speed $t^{-1/10}$ for $d \le 2$, and $t^{-1/5}$ for $d \ge 3$, up to logarithmic corrections. 

%
%
%
%
%
%
%
\section{Notations and results}
\label{s:results}
\setcounter{equation}{0}
We now introduce our present setting and results with more precision. We say that $x,y \in \Z^d$ are neighbors, and write $x \sim y$, if $\|x-y\|_1 = 1$. This turns $\Z^d$ into a graph, and we write $\mathbb{B}$ for the set of (unoriented) edges thus defined. We define the \emph{random walk among random conductances} on $\Z^d$ as follows.

Let $\Omega = (0,+\infty)^{\mathbb{B}}$. An element $\omega = (\omega_e)_{e \in \mathbb{B}}$ of $\Omega$ is called an \emph{environment}. If $e = (x,y) \in \mathbb{B}$, we may write $\omega_{x,y}$ instead of $\omega_e$. By construction, $\omega$ is symmetric: $\omega_{x,y} = \omega_{y,x}$.

For any $\omega \in \Omega$, we consider the Markov process $(X_t)_{t \ge 0}$ with jump rate between $x$ and $y$ given by $\omega_{x,y}$. We write $\PPo_x$ for the law of this process starting from $x \in \Z^d$, $\EEo_x$ for its associated expectation. Its generator is given by
\begin{equation}
\label{defLom}
L^\omega f(x) = \sum_{y \sim x} \omega_{x,y} (f(y)-f(x)).
\end{equation}
The environment $\omega$ is itself a random variable, whose law we write $\P$ (and $\E$ for the corresponding expectation). We assume that 
\begin{itemize}
\item[(H1)] the random variables $(\omega_e)_{e \in \mathbb{B}}$ are independent and identically distributed,
\item[(H2)] there exists $M > 0$ such that almost surely, $\omega_e \in [1,M]$ for every $e \in \mathbb{B}$.
\end{itemize}
Naturally, imposing that $\omega_e \ge 1$ in (H2) instead of requiring the conductances to be bounded from below by a generic positive constant is simply a matter of convenience. 

Let us write $\ov{\P} = \P\PPo_0$ for the measure averaged over the environment, and $\ov{\E}$ for the associated expectation. It was shown in \cite{kipvar} that under $\ov{\P}$ and as $\eps$ tends to $0$, the process $\sqrt{\eps} X_{\eps^{-1} t}$ converges to a Brownian motion, whose covariance matrix we write $D$ (see \cite{sid} for an almost sure result under our present assumptions).

We fix once and for all some $\xi \in \R^d$, and let $\sigma > 0$ be such that $\sigma^2 = \xi \cdot D \xi$. The invariance principle ensures that
$$
\ov{\P}\left[ \xi \cdot X_t \le \sigma x \sqrt{t}  \right] \xrightarrow[t \to +\infty]{} \Phi(x),
$$
where $\Phi(x) = (2\pi)^{-1/2} \int_{-\infty}^x e^{-u^2/2} \d u$. Our aim is to get explicit bounds on the speed of convergence in the above limit. 

\begin{thm}
\label{main}
There exists $q \ge 0$ such that
$$
\sup_{x \in \R} \  \left| \ov{\P} \left[\xi \cdot X_t \le \sigma x \sqrt{t} \right] - \Phi(x) \right| =
\left|
\begin{array}{ll}
O\big(t^{-1/10}  \big) & \text{if } d = 1, \\
O\big(\log^q(t) \ t^{-1/10}  \big) & \text{if } d = 2, \\
O\big( \log(t)\ t^{-1/5} \big) & \text{if } d = 3, \\
O\big(  t^{-1/5} \big) & \text{if } d \ge 4.
\end{array}
\right.
$$
\end{thm}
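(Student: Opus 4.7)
My plan is to reduce the statement to a quantitative martingale central limit theorem via the classical Kipnis--Varadhan device. Under (H1)--(H2) there exists a corrector $\chi_\xi(x,\omega)$ (with stationary gradients) such that $\Phi_\xi(x,\omega) := \xi\cdot x - \chi_\xi(x,\omega)$ is $L^\omega$-harmonic; consequently the process
$$M_t := \Phi_\xi(X_t,\omega) - \Phi_\xi(0,\omega) = \xi\cdot X_t - \bigl(\chi_\xi(X_t,\omega)-\chi_\xi(0,\omega)\bigr)$$
is a $\PPo_0$-martingale with predictable quadratic variation $\langle M\rangle_t = \int_0^t \Gamma(\tau_{X_s}\omega)\,\d s$, where $\Gamma$ is a bounded local function of the environment whose mean under the invariant law $\pi$ of the environment viewed from the particle equals $\sigma^2$. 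For any $\delta>0$ the triangle inequality
$$\ov\P[\xi\cdot X_t \le \sigma x\sqrt{t}] \le \ov\P\bigl[M_t \le \sigma(x+\delta)\sqrt{t}\bigr] + \ov\P\bigl[|\chi_\xi(X_t,\omega)-\chi_\xi(0,\omega)| > \sigma\delta\sqrt{t}\bigr],$$
its matching lower bound, and $|\Phi(x+\delta)-\Phi(x)| \le \delta$ reduce the proof to (a) an annealed Berry--Esseen bound for $M_t$, and (b) a tail bound for the corrector increment. The advertised rate will follow by optimising $\delta$.

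For (a), I would apply a Heyde--Brown type martingale Berry--Esseen inequality: for square-integrable martingales with bounded jumps,
$$\sup_x \left| \PPo_0[M_t \le \sigma x\sqrt{t}] - \Phi(x) \right| \le C \left( \EEo_0\left[ \Bigl(\frac{\langle M\rangle_t}{\sigma^2 t} - 1\Bigr)^{\!2}\right]\right)^{1/5} + \text{(jump correction)}.$$
Averaging over $\omega$ and invoking Jensen turns the main term into a variance of a time integral along the trajectory,
$$\ov\E\bigl[(\langle M\rangle_t - \sigma^2 t)^2\bigr] = \int_0^t\!\int_0^t \langle g,\, P_{|u-v|}\, g\rangle_\pi \, \d u\, \d v, \qquad g := \Gamma - \sigma^2,$$
where $P_t$ is the semigroup of the environment process. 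Inserting the polynomial variance-decay estimate of \cite{vardecay} for $P_t$, with its dimension-dependent exponent, and computing the double integral yields, after taking the fifth root, the rates $t^{-1/5}$ in high dimension and $t^{-1/10}$ in low dimension, with logarithmic corrections at the borderline dimension $d=2$.

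For (b), stationarity of the environment process and an $L^2$ bound on the corrector (of the kind surveyed in \cite{gm}) combined with a diffusive moment bound on $X_t$ under $\ov\P$ give $\ov\E[(\chi_\xi(X_t,\omega)-\chi_\xi(0,\omega))^2]$ of order $\sqrt t$, $\log t$, and $O(1)$ in dimensions $1$, $2$, and $\ge 3$ respectively; Chebyshev then controls the tail term, and the optimal choice of $\delta$ in the triangle inequality does not worsen the rate of (a) beyond logarithmic factors.

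The main obstacle is step (a): one must apply the variance-decay estimates of \cite{vardecay} to $g = \Gamma - \sigma^2$, which is a quadratic form in the gradient of the corrector rather than a plain bounded local function, so some extension or variant of those estimates may be required; and the sharp tracking of logarithmic corrections, in particular to identify the exponent $q$ in $d=2$, is delicate book-keeping. A secondary technical difficulty is the jump-correction term in the continuous-time Heyde--Brown inequality, which demands a uniform-in-$\omega$ bound on the fourth moment of $M$-increments over short time intervals, itself a consequence of the ellipticity assumption (H2).
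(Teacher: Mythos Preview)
Your outline is close in spirit to the paper's proof --- same Heyde--Brown martingale Berry--Esseen input, same triangle-inequality splitting, same reduction to variance decay along the environment semigroup --- and for $d=1$ it is essentially identical. The one substantive difference, and the place where your acknowledged ``main obstacle'' bites, is the choice of corrector.

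You work with the \emph{true} corrector $\chi_\xi$, so that $\Gamma$ is a quadratic form in $\nabla\chi_\xi$. You correctly note that the variance-decay results of \cite{vardecay} do not apply directly to such a nonlocal function, and you leave this unresolved. The paper sidesteps the issue by working instead with the \emph{approximate} corrector $\phi_\mu$ solving $(\mu-\L)\phi_\mu=\mathfrak d$, and only at the end setting $\mu=1/t$. The point is that the required spatial-decorrelation input --- a variance bound on box averages of the energy density $w_\mu = \mu\phi_\mu^2 + v_\mu$ --- is exactly what Gloria--Otto \cite{go11a} prove, for $\phi_\mu$ with $\mu>0$. Feeding that spatial estimate into the machinery of \cite{vardecay} (Nash inequality plus resolvent comparison) yields the semigroup variance decay for $v_\mu$, and hence the bound on $\ov\E\bigl[(\langle M_\mu\rangle_t/t-\sigma_\mu^2)^2\bigr]$. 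The price is two extra small steps: controlling $|\sigma_\mu^2-\sigma^2|$ (again from \cite{go11a,go11b}) and bounding the remainder $R_\mu(t)$ (done spectrally, using only $\E[\phi_{1/t}^2]$). Your route with the true corrector could in principle be carried out, but it would require $\mu\to 0$ limits of the Gloria--Otto estimates and, in $d=2$, extra care because $\chi_\xi$ is not stationary; the paper's $\phi_\mu$ device avoids both.

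One minor correction: the jump term is \emph{not} a consequence of ellipticity alone. The jumps of $M$ are of size $|\xi\cdot z+\nabla_z\chi_\xi|$, and in $d=2$ the gradient of the corrector is unbounded; the control of $J(M)$ in the paper again relies on the Gloria--Otto moment bounds $\E[\phi_\mu^p]=O(\log^q(\mu^{-1}))$ (part (ii) of their theorem), not merely on (H2).
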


\noindent \textbf{Notations}. Throughout the rest of the text, $q \ge 0$ refers to a generic constant, whose value may change from place to place and that appears only for $d = 2$. We write $\log_+(x)$ for $\max(\log(x),1)$.

%
%
%
%
%
%
%
\section{Structure of the proof}
\label{s:structure}
\setcounter{equation}{0}

Let us outline the method of proof of Theorem~\ref{main} for $d \ge 2$. 

One classical route towards an invariance principle for $(\xi \cdot X_t)_{t \ge 0}$ is to decompose the process into the sum of a martingale plus a remainder. The result can then be obtained showing that the martingale satisfies an invariance principle, and that the remainder term is negligible.

In order to prove Theorem~\ref{main}, we use this same decomposition. We will rely on a Berry-Esseen estimate for martingales due to \cite{hb70} (see also \cite{ha88}) that we now recall.

\begin{thm}[\cite{hb70}]
\label{bemart}
Let $(M(t))_{t \ge 0}$ be a locally square-integrable martingale (with respect to the probability measure $\ov{\P}$). Let $\Delta M(t) = M(t) - M(t^-)$ be its jump process, and $\langle M \rangle_t$ be its predictable quadratic variation. Define
\begin{equation}
\label{defVM}
V(M) = \ov{\E} \left[ \left( \langle M \rangle_1 - 1 \right)^2 \right],
\end{equation}
\begin{equation}
\label{defJM}
J(M) = \ov{\E}\left[ \sum_{0 \le t \le 1} (\Delta M(t))^4 \right].
\end{equation}
There exists a universal constant $C > 0$ (i.e.\ independent of $M$) such that
\begin{equation}
\label{eq:bemart}
\sup_{x \in \R} \  \left| \ov{\P} \left[M(1) \le  x \right] - \Phi(x) \right| \le C (V(M) + J(M))^{1/5}.
\end{equation}
\end{thm}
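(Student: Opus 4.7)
My plan is to adapt the characteristic-function approach of Heyde and Brown from discrete to continuous time. The first ingredient is Esseen's smoothing inequality: denoting $\phi(\theta) = \ov{\E}[e^{i\theta M(1)}]$, one has for every $T > 0$
$$
\sup_{x \in \R} \left| \ov{\P}[M(1) \le x] - \Phi(x) \right| \le \frac{1}{\pi}\int_{-T}^{T} \frac{|\phi(\theta) - e^{-\theta^2/2}|}{|\theta|} \, \d \theta + \frac{C}{T},
$$
so the entire task reduces to a pointwise bound on $|\phi(\theta) - e^{-\theta^2/2}|$ on a window $[-T,T]$ with $T$ optimized at the end.

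To produce such a bound I would apply Itô's formula to $t \mapsto e^{i\theta M(t)}$, splitting $M$ into its continuous and purely discontinuous parts. A standard localization kills the stochastic integral in expectation and leaves
$$
\phi(\theta) = 1 - \frac{\theta^2}{2}\, \ov{\E}\left[\int_0^1 e^{i\theta M(s-)}\, \d \langle M \rangle_s\right] + R(\theta),
$$
where $R(\theta)$ gathers the jump Taylor remainders $e^{i\theta \Delta M(s)} - 1 - i\theta \Delta M(s) + (\theta^2/2)(\Delta M(s))^2$, expanded to high enough order so that the fourth-moment quantity $J(M)$ carries the estimate. Comparing this with the analogous relation $e^{-\theta^2/2} = 1 - (\theta^2/2)\int_0^1 e^{-\theta^2(1-s)/2}\, \d s$ for the standard Gaussian characteristic function, the discrepancy $\phi(\theta) - e^{-\theta^2/2}$ is seen to be driven by exactly two quantities: the $L^2$ fluctuation of $\langle M \rangle_1$ around $1$, which is precisely $V(M)^{1/2}$ by definition, and the jump term $R(\theta)$. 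Combining these two contributions via Cauchy--Schwarz and a Gronwall-type iteration of the integral identity above yields a bound of the schematic form
$$
|\phi(\theta) - e^{-\theta^2/2}| \le C\, e^{C\theta^2}\,\mathrm{poly}(|\theta|)\,\bigl(V(M)^{1/2} + J(M)^{1/2}\bigr) \qquad (|\theta| \le T).
$$

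Plugging this estimate back into Esseen's inequality and balancing the various powers of $T$ against the tail term $C/T$ produces the exponent $1/5$ appearing in (\ref{eq:bemart}). The main obstacle in executing this plan is the jump analysis: a naïve Taylor expansion at low order would only let $L^2$ or $L^3$ information on the jumps be used, yielding a worse final exponent (of order $1/3$ or $1/4$). Pushing the expansion one more order and fully exploiting the quartic quantity $J(M)$ is precisely what enables the $1/5$ exponent, and what forces the theorem to involve the specific fourth-moment quantity $J(M)$ rather than a lower-order analogue.
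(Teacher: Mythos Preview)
The paper does not prove this theorem. Theorem~\ref{bemart} is quoted from the literature (Heyde and Brown~\cite{hb70} for the discrete-time case, with the continuous-time formulation in Haeusler~\cite{ha88}) and used as a black box; there is no proof in the paper to compare your proposal against.

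That said, your sketch is a reasonable outline of how the original argument runs: Esseen's smoothing lemma followed by a characteristic-function expansion via It\^o's formula, with the jump remainder controlled by the fourth-order quantity $J(M)$ and the quadratic-variation fluctuation by $V(M)$, and a final optimization over the truncation level $T$ yielding the exponent $1/5$. A couple of points in your write-up are loose. The factor $e^{C\theta^2}$ you display would, if taken literally, destroy the optimization in $T$; in the actual argument one works with $e^{i\theta M(t) + \theta^2 \langle M\rangle_t/2}$ (or a close variant) so that the exponential is absorbed rather than left as a growing prefactor. Also, what you call a ``Gronwall-type iteration'' is really a direct substitution of the It\^o identity back into itself once, not an iterated inequality. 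These are matters of execution rather than strategy; the overall plan matches the cited references.
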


Before constructing the martingales that approximate the process $\xi \cdot X_t$, we need to introduce the following auxiliary process. 
Let $(\theta_x)_{x \in \Z^d}$ be the translations that act on the set of environments as follows: for any pair of neighbors $y,z \in \Z^d$, $(\theta_x \ \omega)_{y,z} = \omega_{x+y,x+z}$. The \emph{environment viewed by the particle} is the process defined by
\begin{equation}
\label{defenvpart}
\omega(t) = \theta_{X_t} \ \omega.
\end{equation}
One can check that $(\omega(t))_{t \ge 0}$ is a Markov process, whose generator is given by
$$
\L f (\omega) =  \sum_{|z| = 1} \omega_{0,z} (f(\theta_z \ \omega) - f(\omega)),
$$
and moreover, that the measure $\P$ is reversible and ergodic for this process. The operator $-\L$ thus defines a positive and self-adjoint operator on $\LL^2(\P)$.

Following \cite{kipvar}, let us define, for any $\mu > 0$, the function $\phi_\mu \in \LL^2(\P)$ such that
\begin{equation}
\label{defphimu}
(\mu - \L) \phi_\mu = \mathfrak{d},
\end{equation}
where the function $\mathfrak{d}$, that we call the \emph{local drift in the direction} $\xi$, is given by
\begin{equation}
\label{defmfkd}
\mathfrak{d}(\omega) = L^\omega (x \mapsto \xi \cdot x)(0) = \sum_{|z| = 1} \omega_{0,z} \ \xi \cdot z.
\end{equation}
We decompose $\xi \cdot X_t$ as the sum $M_\mu(t) + R_\mu(t)$, where
\begin{equation}
\label{defMmu}
M_\mu(t) = \xi \cdot X_t + \phi_\mu(\omega(t)) - \phi_\mu(\omega(0)) - \mu \int_0^t \phi_\mu(\omega(s)) \ \d s,
\end{equation}
and
\begin{equation}
\label{defRmu}
R_\mu(t) =  - \phi_\mu(\omega(t)) + \phi_\mu(\omega(0)) + \mu \int_0^t \phi_\mu(\omega(s)) \ \d s.
\end{equation}
\begin{prop}
\label{controlmart}
The process $(M_\mu(t))_{t \ge 0}$ is a square-integrable martingale under $\ov{\P}$ (with respect to the natural filtration associated to $(X_t)_{t \ge 0}$). Let $\sigma_\mu > 0$ be such that
\begin{equation}
\label{defsigmamu}
\sigma_\mu^2 = \sum_{|z| = 1} \E\left[\omega_{0,z} (\xi \cdot z + \phi_\mu(\theta_z \ \omega) - \phi_\mu(\omega))^2  \right].
\end{equation}
There exists $C > 0$ such that the following two inequalities hold for any $\mu, t >0$,
\begin{equation}
\label{controlV}
\ov{\E}\left[ \left( \frac{\langle M_\mu \rangle_t}{t} - \sigma_\mu^2 \right)^2 \right] \le 
\left|
\begin{array}{ll}
C \log_+^q(\mu^{-1}) \left( 1 / \sqrt{t} + \mu^2 \right) & \text{if } d = 2 , \\
C \left( \log_+(t)/t + \mu^2 \right) & \text{if } d = 3, \\
C \left( 1/t + \mu^2 \right) & \text{if } d \ge 4,
\end{array}
\right.
\end{equation}
\begin{equation}
\label{controlJ}
\frac{1}{t^2} \ov{\E}\left[ \sum_{0 \le s \le t} (\Delta M_\mu(s))^4 \right] \le 
\left|
\begin{array}{ll}
C \log_+^q(\mu^{-1}) / t & \text{if } d = 2, \\
C/t & \text{if } d \ge 3.
\end{array}
\right.
\end{equation}
\end{prop}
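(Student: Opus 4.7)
The plan is to split Proposition~\ref{controlmart} into three essentially independent parts: the martingale identification, the bound on $V(M_\mu)$, and the bound on $J(M_\mu)$. The martingale property follows from two applications of Dynkin's formula. Since $(\mu-\L)\phi_\mu=\mathfrak{d}$, the process $\phi_\mu(\omega(t))-\phi_\mu(\omega(0))-\int_0^t(\mu\phi_\mu-\mathfrak{d})(\omega(s))\,\d s$ is a local martingale under $\ov{\P}$; since the generator of $X$ applied to $y\mapsto\xi\cdot y$ at $X_s$ equals $\mathfrak{d}(\omega(s))$, the process $\xi\cdot X_t-\int_0^t \mathfrak{d}(\omega(s))\,\d s$ is also a local martingale. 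Adding them yields $M_\mu(t)$, and square integrability follows from (H2) together with $\phi_\mu\in\LL^2(\P)$. The jumps of $M_\mu$ coincide with those of $X$: when $X$ jumps by $z$ at time $s$,
$$\Delta M_\mu(s)=\xi\cdot z+\phi_\mu(\theta_z\omega(s^-))-\phi_\mu(\omega(s^-)),$$
the conditional rate of such a jump being $\omega(s^-)_{0,z}$. Standard pure-jump calculations then give
$$\langle M_\mu\rangle_t=\int_0^t h_\mu(\omega(s))\,\d s,\qquad h_\mu(\omega)=\sum_{|z|=1}\omega_{0,z}\bigl(\xi\cdot z+\phi_\mu(\theta_z\omega)-\phi_\mu(\omega)\bigr)^2,$$
with $\E[h_\mu]=\sigma_\mu^2$; compensating the fourth-power sum in the same way gives $\ov{\E}\bigl[\sum_{0\le s\le t}(\Delta M_\mu(s))^4\bigr]=t\,\E[k_\mu]$, where $k_\mu$ is the same sum with the squared parenthesis replaced by a fourth power.

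For the bound on $V(M_\mu)$, write $\td h_\mu=h_\mu-\sigma_\mu^2$. Reversibility and stationarity of $\P$ under $(\omega(t))$ yield
$$\ov{\E}\left[\left(\int_0^t \td h_\mu(\omega(s))\,\d s\right)^2\right]=2\int_0^t(t-s)\,\langle\td h_\mu,e^{s\L}\td h_\mu\rangle_{\LL^2(\P)}\,\d s,$$
so the whole matter reduces to bounding the autocorrelation $s\mapsto\langle\td h_\mu,e^{s\L}\td h_\mu\rangle$. I would plug in two ingredients that are the main content of \cite{vardecay}: (i) polynomial variance-decay estimates for $e^{s\L}$ acting on centered functions of the environment with sufficient local dependence, with dimension-dependent rates that, when integrated against $(t-s)$ and divided by $t^2$, produce the announced $1/\sqrt{t}$ ($d=2$), $\log_+(t)/t$ ($d=3$), and $1/t$ ($d\ge 4$) factors; and (ii) $\LL^p$ bounds on the discrete gradients $\phi_\mu(\theta_z\omega)-\phi_\mu(\omega)$, which control both $\|\td h_\mu\|_{\LL^2(\P)}$ and the constants appearing in (i), uniformly in $\mu$ when $d\ge 3$ and with only polylogarithmic blow-up in $\mu$ when $d=2$. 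These same $\LL^4$-level gradient bounds, applied to $k_\mu$, immediately yield \eqref{controlJ}.

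The additive $\mu^2$ remainder in \eqref{controlV} records the cost of replacing $\phi_\mu$ by its $\mu=0$ counterpart whenever (i) is applied to a function that is only implicitly $\mu$-dependent; it is controlled via the resolvent identity relating $\phi_\mu$ to $\phi_{\mu'}$ together with the uniform bound $\mu\,\E[\phi_\mu^2]\le C$ from \cite{vardecay}. I expect this last point to be the main technical headache: the inputs (i) and (ii) from \cite{vardecay} can be invoked as black boxes, but tracking the $\mu\to 0$ behaviour carefully enough to obtain the clean ``main rate plus $\mu^2$'' form in each dimension requires some care, and this form is precisely what will later allow the choice of $\mu$ as a power of $t$ in the proof of Theorem~\ref{main}.
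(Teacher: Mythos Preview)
Your treatment of the martingale identification and of \eqref{controlJ} is essentially correct and close to the paper's (your compensator argument for $J$ is in fact slightly more direct than the paper's, which routes through a sum-to-integral comparison lemma before invoking stationarity). The genuine gap is in your derivation of \eqref{controlV}.

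You propose to feed $\td h_\mu$ directly into the variance-decay results of \cite{vardecay}, invoking them as a black box for ``centered functions with sufficient local dependence''. But $h_\mu$ depends on $\phi_\mu$, which is defined by the \emph{global} resolvent equation $(\mu-\L)\phi_\mu=\mathfrak d$ and is not a local function of the conductances; the results of \cite{vardecay} do not apply to it off the shelf, and $\LL^p$ control on the discrete gradients of $\phi_\mu$ does not by itself supply the spatial-decorrelation input those results require. The paper's route is different: one first invokes \cite{go11a} (recorded as Theorem~\ref{t:go}(i)) to obtain a \emph{spatial} variance estimate---not for $h_\mu=v_\mu$ itself, but for the energy density $w_\mu=\mu\phi_\mu^2+v_\mu$, which is the object for which the Gloria--Otto estimate is actually available. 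This spatial estimate is then converted into temporal decay of $\var[e^{s\L}w_\mu]$ via the Nash-inequality and resolvent-comparison machinery of \cite{vardecay}. Only at the end does one write $v_\mu=w_\mu-\mu\phi_\mu^2$: the $w_\mu$ piece yields the $t$-dependent rates, and the $\mu\phi_\mu^2$ piece contributes the additive $\mu^2$ through $\mu^2\,\E[\phi_\mu^4]$, bounded by Theorem~\ref{t:go}(ii). So the $\mu^2$ term is not, as you guess, the cost of a resolvent comparison with a $\mu=0$ corrector; it is the price of the necessary detour through $w_\mu$. Without that detour your argument is incomplete, because the spatial-decorrelation input that \cite{vardecay} needs is precisely what \cite{go11a} supplies for $w_\mu$ and what is not otherwise available for $v_\mu$.
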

Proposition~\ref{controlmart} provides the estimates required to apply Theorem~\ref{bemart}. We thus obtain an explicit bound, that depends on the dimension, $\mu$, and $t$, on
\begin{equation*}
\sup_{x \in \R} \  \left| \ov{\P} \left[M_\mu(t) \le \sigma_\mu x \sqrt{t} \right] - \Phi(x) \right|.
\end{equation*}
The proof of Theorem~\ref{main} is then achieved in two steps. First, we need to control the difference between $\sigma_\mu$ and $\sigma$. Second, recalling that $\xi \cdot X_t = M_\mu(t) + R_\mu(t)$, we need to show that, for a suitable choice of $\mu$ as a function of $t$, the remainder term $R_\mu(t)$ becomes negligible in the limit. These two facts are the content of the next two propositions.
\begin{prop}
\label{p:controlsigma}
One has
\begin{equation*}
\big| \sigma_\mu^2 - \sigma^2 \big| =
\left|
\begin{array}{ll}
O\big(\mu \log^q(\mu^{-1})  \big) & \text{if } d = 2,\\
O\big(\mu^{3/2}  \big) & \text{if } d = 3,\\
O\big(\mu^2 \log(\mu^{-1})  \big) & \text{if } d = 4,\\
O\big(\mu^2  \big) & \text{if } d \ge 5.
\end{array}
\right.
\end{equation*}
\end{prop}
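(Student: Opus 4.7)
The plan is to translate $\sigma_\mu^2-\sigma^2$ into a single integral against the spectral measure of $\mathfrak{d}$ and then apply a polynomial upper bound on that measure near zero. Starting from \eqref{defsigmamu}, I expand the square, use the symmetry $\omega_{x,y}=\omega_{y,x}$, and change variables $\omega\mapsto\theta_z\omega$ in the cross term to obtain
\begin{equation*}
\sigma_\mu^2=\sigma_0^2-4\,\E[\mathfrak{d}\,\phi_\mu]+\sum_{|z|=1}\E[\omega_{0,z}(\phi_\mu(\theta_z\omega)-\phi_\mu(\omega))^2],
\end{equation*}
where $\sigma_0^2:=\sum_{|z|=1}\E[\omega_{0,z}(\xi\cdot z)^2]$ does not depend on $\mu$. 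A direct computation identifies the last sum with $2\,\E[(-\L\phi_\mu)\phi_\mu]$, and substituting $-\L\phi_\mu=\mathfrak{d}-\mu\phi_\mu$ from \eqref{defphimu} collapses the identity to
\begin{equation*}
\sigma_\mu^2=\sigma_0^2-2\,\E[\mathfrak{d}\,\phi_\mu]-2\mu\,\E[\phi_\mu^2].
\end{equation*}

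Next, let $\rho$ denote the spectral measure of $\mathfrak{d}$ with respect to the non-negative self-adjoint operator $-\L$ on $\LL^2(\P)$. By the spectral theorem, $\E[\mathfrak{d}\,\phi_\mu]=\int_0^\infty (\mu+\lambda)^{-1}d\rho(\lambda)$ and $\E[\phi_\mu^2]=\int_0^\infty(\mu+\lambda)^{-2}d\rho(\lambda)$, so
\begin{equation*}
\sigma_\mu^2=\sigma_0^2-2\int_0^\infty \frac{2\mu+\lambda}{(\mu+\lambda)^2}\,d\rho(\lambda).
\end{equation*}
Kipnis-Varadhan theory identifies $\sigma^2=\lim_{\mu\to 0}\sigma_\mu^2=\sigma_0^2-2\int_0^\infty \lambda^{-1}d\rho(\lambda)$, and a direct subtraction yields the clean formula
\begin{equation*}
\sigma_\mu^2-\sigma^2=2\mu^2\int_0^\infty \frac{d\rho(\lambda)}{\lambda(\mu+\lambda)^2}.
\end{equation*}

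The substantive input is then a polynomial bound $\rho([0,\lambda])\le C\lambda^{d/2+1}$ (with an extra factor $\log^q(\lambda^{-1})$ for $d=2$), equivalently the decay $\E[\mathfrak{d}\,e^{s\L}\mathfrak{d}]\le Cs^{-d/2-1}$ for $s\ge 1$, of the type derived in \cite{vardecay}. With this estimate I split the integral at $\lambda=\mu$: on $[0,\mu]$, using $(\mu+\lambda)^2\ge\mu^2$ gives a contribution of order $\int_0^\mu \lambda^{d/2-1}d\lambda\sim\mu^{d/2}$; on $[\mu,K]$ for some fixed $K$, using $(\mu+\lambda)^2\ge\lambda^2$ gives $\mu^2\int_\mu^K \lambda^{d/2-3}d\lambda$, which is $O(\mu^{d/2})$ for $d<4$, $O(\mu^2\log(\mu^{-1}))$ for $d=4$, and $O(\mu^2)$ for $d\ge 5$; the tail $[K,\infty)$ contributes $O(\mu^2)$ via the finite total mass $\rho([0,\infty))=\E[\mathfrak{d}^2]$. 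Summing reproduces the four cases stated.

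The main obstacle is the spectral estimate above. The generic variance-decay rate $s^{-d/2}$ valid for any bounded mean-zero local function is one power short of what we need, and the extra factor comes from the fact that $\mathfrak{d}$ is morally a gradient of a local function, i.e.\ lies in the range of $(-\L)^{1/2}$ (with a log loss when $d=2$). In dimension two one must also track the precise exponent $q$ of the logarithmic correction. Everything else reduces to elementary algebra and routine integration.
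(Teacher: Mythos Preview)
Your argument is correct and is precisely the spectral-measure route the paper points to: the paper does not prove this proposition itself but simply cites \cite[Theorem~1]{go11b} and, for ``a slightly different point of view'', \cite[Theorem~3]{gm}; your derivation of
\[
\sigma_\mu^2-\sigma^2 \;=\; 2\mu^2\int_0^\infty \frac{d e_{\mfk d}(\lambda)}{\lambda(\mu+\lambda)^2}
\]
and the subsequent dyadic splitting is exactly the \cite{gm} argument. The one place to tighten is the reference for the key spectral input $e_{\mfk d}([0,\lambda])\lesssim \lambda^{d/2+1}$ (with a logarithm in $d=2$): \cite{vardecay} supplies the machinery that gives the generic $\lambda^{d/2}$ bound, but the extra power coming from the gradient structure of $\mfk d$ is established in \cite{gm} (cf.\ \cite[Theorem~5]{gm}); you correctly identify this as the crux and give the right heuristic, so just point the citation there.
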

Proposition~\ref{p:controlsigma} is proved in \cite[Theorem~1]{go11b} (see also \cite[Theorem~3]{gm} with $k = 1$ for a slightly different point of view).
\begin{prop}
\label{controlrest}
One has
$$
\ov{\E}[(R_{1/t}(t))^2] = 
\left|
\begin{array}{ll}
O\left( \log^q(t) \right) & \text{if } d = 2, \\
O\left( 1 \right) & \text{if } d \ge 3.
\end{array}
\right.
$$
\end{prop}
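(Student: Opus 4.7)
The plan is to obtain a spectral representation of $\ov{\E}[R_\mu(t)^2]$ with respect to the non-negative self-adjoint operator $-\L$ on $\LL^2(\P)$, to reduce the desired estimate to a bound on $\|\phi_{1/t}\|_{\LL^2(\P)}^2$, and finally to invoke the decay of the drift correlator $\langle \mathfrak{d},\,e^{s\L}\mathfrak{d}\rangle$ established in \cite{vardecay}.

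First, by reversibility of $\P$ for $(\omega(s))_{s \ge 0}$, the two-point function reads $\ov{\E}[f(\omega(s))\,g(\omega(u))] = \langle f,\,e^{-|s-u|(-\L)} g\rangle_{\LL^2(\P)}$. Writing $R_\mu(t) = A + B$ with $A = \phi_\mu(\omega(0)) - \phi_\mu(\omega(t))$ and $B = \mu\int_0^t \phi_\mu(\omega(s))\,\d s$, expanding the square, and noticing that the cross term $\ov{\E}[AB]$ vanishes by the change of variable $s \mapsto t-s$, one arrives at
\begin{equation*}
\ov{\E}[R_\mu(t)^2] = 2\int_0^\infty \left[(1-e^{-t\lambda}) + \frac{\mu^2(t\lambda - 1 + e^{-t\lambda})}{\lambda^2}\right] \frac{\d e_\mathfrak{d}(\lambda)}{(\mu+\lambda)^2},
\end{equation*}
where $e_\mathfrak{d}$ denotes the spectral measure of $\mathfrak{d}$ relative to $-\L$ and I have used the identity $\phi_\mu = (\mu - \L)^{-1}\mathfrak{d}$ to rewrite the spectral measure of $\phi_\mu$ as $(\mu+\lambda)^{-2}\,\d e_\mathfrak{d}(\lambda)$.

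Specializing to $\mu = 1/t$ and setting $u = t\lambda$, the bracket becomes $\psi(u) := (1-e^{-u}) + (u-1+e^{-u})/u^2$, a bounded function on $[0,\infty)$ that interpolates between $\psi(0^+) = 1/2$ and $\psi(\infty) = 1$. Hence
\begin{equation*}
\ov{\E}[R_{1/t}(t)^2] \le C \int_0^\infty \frac{\d e_\mathfrak{d}(\lambda)}{(1/t+\lambda)^2} = C\,\|\phi_{1/t}\|_{\LL^2(\P)}^2.
\end{equation*}
The right-hand side is then estimated via the Laplace identity $(\mu+\lambda)^{-2} = \int_0^\infty s\,e^{-s(\mu+\lambda)}\,\d s$: a Fubini argument gives
\begin{equation*}
\|\phi_\mu\|_{\LL^2(\P)}^2 = \int_0^\infty s\,e^{-s\mu}\,\langle \mathfrak{d},\,e^{s\L}\mathfrak{d}\rangle\,\d s,
\end{equation*}
and plugging in the heat-kernel decay $\langle \mathfrak{d},\,e^{s\L}\mathfrak{d}\rangle \le C(1+s)^{-d/2-1}\log_+^q(s)$ from \cite{vardecay}, then splitting the integral at $s = t$, yields $\|\phi_{1/t}\|_{\LL^2(\P)}^2 = O(\log^q(t))$ for $d = 2$ and $O(1)$ for $d \ge 3$, which is the claim.

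The genuinely difficult step is the heat-kernel bound $\langle \mathfrak{d},\,e^{s\L}\mathfrak{d}\rangle \le C(1+s)^{-d/2-1}\log_+^q(s)$, a manifestation of the divergence/gradient structure of the local drift $\mathfrak{d}$ and the principal technical result of the companion paper \cite{vardecay}. Granted this input, the remainder of the argument reduces to straightforward spectral calculus and Fubini, together with the elementary verification that the bracket in the spectral representation is a bounded function of $u=t\lambda$ when $\mu=1/t$.
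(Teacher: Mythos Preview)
Your proof is correct and follows essentially the same route as the paper: the spectral representation of $\ov{\E}[R_\mu(t)^2]$ via reversibility, the orthogonality of the two pieces of $R_\mu(t)$, the observation that the bracket is a bounded function of $t\lambda$ when $\mu=1/t$, and the reduction to a bound on $\E[\phi_{1/t}^2]$ are all identical to the paper's argument.

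The only point of divergence is the very last step. The paper simply invokes part~(ii) of Theorem~\ref{t:go} (the Gloria--Otto moment bound on $\phi_\mu$, from \cite{go11a}) to control $\E[\phi_{1/t}^2]$ directly. You instead rewrite $\|\phi_\mu\|_{\LL^2}^2$ through the Laplace identity and feed in the semi-group decay $\langle \mathfrak{d}, e^{s\L}\mathfrak{d}\rangle \lesssim (1+s)^{-d/2-1}\log_+^q(s)$. This is a legitimate alternative; note however that the precise spectral/semi-group exponent for $\mathfrak{d}$ is stated in \cite[Theorem~5]{gm} rather than in \cite{vardecay} itself, so you may want to adjust the citation. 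The paper's route is marginally shorter because it treats the moment bound as a black box, while yours makes the connection to the spectral exponent of $\mathfrak{d}$ explicit.
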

We now have all the necessary information to prove Theorem~\ref{main}. 
\begin{proof}[Proof of Theorem~\ref{main} for $d \ge 2$]
Let us write 
$$
\psi(t) = 
\left|
\begin{array}{ll}
\log^q(t) \ t^{-1/10} & \text{if } d = 2, \\
\log(t)\ t^{-1/5} & \text{if } d = 3, \\
t^{-1/5}  & \text{if } d \ge 4.
\end{array}
\right.
$$
Choosing $\mu = 1/t$, we learn from Proposition~\ref{controlmart} and Theorem~\ref{bemart} that
\begin{equation}
\label{beM1t}
\sup_{x \in \R} \  \left| \ov{\P} \left[M_{1/t}(t) \le  x \sqrt{t} \right] - \Phi(x/\sigma_{1/t}) \right| = O\big(\psi(t)\big).
\end{equation}
Recalling that $\xi \cdot X_t = M_{1/t}(t) + R_{1/t}(t)$, we can write
\begin{equation}
\label{compunsens}
\ov{\P}[M_{1/t}(t) \le (x - \psi(t)) \sqrt{t}] \le \ov{\P}[\xi \cdot X_t \le x \sqrt{t}] + \ov{\P}[|R_{1/t}(t)| > \psi(t) \sqrt{t}].
\end{equation}
The second term in the right-hand side is independent of $x$ and bounded by 
$$
\frac{\ov{\E}[(R_{1/t}(t))^2]}{\psi(t)^2 t},
$$
which we know from Proposition~\ref{controlrest} to be $O(\psi(t))$. Using (\ref{beM1t}), we thus obtain that, uniformly over $x \in \R$,
\begin{equation}
\label{onesided}
\ov{\P}[\xi \cdot X_t \le x \sqrt{t}] \ge \Phi((x-\psi(t))/\sigma_{1/t})+ O\big(\psi(t)\big).
\end{equation}
Let us now show that
\begin{equation}
\label{Phiunif}	
\sup_{x \in \R} \left| \Phi((x-\psi(t))/\sigma_{1/t}) - \Phi(x/\sigma) \right|  = O\big( \psi(t) \big).
\end{equation}
In order to prove (\ref{Phiunif}), it is sufficient to consider only $x$ ranging in the interval $[-\sqrt{t},\sqrt{t}]$. For $x$ outside this interval, the bounds 
$$
\Phi(x) = O(e^{-x^2/2}) \quad (x \to - \infty) \quad \text{ and } \quad 1 - \Phi(x) = O(e^{-x^2/2}) \quad (x \to + \infty),
$$
together with the fact that $\sigma_{1/t} \to \sigma > 0$, are sufficient for the purpose of showing (\ref{Phiunif}). For $x \in [-\sqrt{t},\sqrt{t}]$, we use the fact that the derivative of $\Phi$ is bounded by $1$ to write
$$
\left| \Phi((x-\psi(t))/\sigma_{1/t}) - \Phi(x/\sigma) \right| \le |x| \left|\frac{1}{\sigma_{1/t}} - \frac{1}{\sigma}\right| + \frac{\psi(t)}{\sigma_{1/t}}.
$$
Proposition~\ref{p:controlsigma} ensures that the latter is indeed $O(\psi(t))$, uniformly over $x \in [-\sqrt{t},\sqrt{t}]$, and we have thus proved (\ref{Phiunif}).

This and inequality (\ref{onesided}) imply that, uniformly over $x \in \R$,
$$
\ov{\P}[\xi \cdot X_t \le x \sqrt{t}] \ge \Phi(x/\sigma)+ O\big(\psi(t)\big).
$$
The converse inequality is proved in the same way.
\end{proof}

\subsection*{Organization of the paper.} The rest of the paper is organized as follows. In section~\ref{s:quadvar}, we write the quadratic variation of $M_\mu$ as an additive functional of the environment viewed by the particle of the form
$$
\int_0^t v_\mu(\omega(s)) \ \d s,
$$
where $v_\mu$ is expressed in terms of the approximate corrector $\phi_\mu$. 
Section~\ref{s:decay} contains a key estimate on the decay of the variance of $v_\mu$ along the semi-group of $(\omega(s))$. Our starting point is a spatial decorrelation property of $(v_\mu(\theta_x \ \omega))_{x \in \Z^d}$ proved in \cite{go11a}, up to a minor modification that is commented on in Appendix~\ref{a:go}. We then pass to time decorrelations along the semi-group using a method from \cite{vardecay} that relies on Nash inequalities and a comparison of resolvents. The control of the fluctuations of the quadratic variation in (\ref{controlV}) is then obtained in section~\ref{s:fluctqv}. The upper bound (\ref{controlJ}) concerning the jumps of the martingale is proved in section~\ref{s:jump}. Proposition~\ref{controlrest} is then proved in section~\ref{s:rest}. Section~\ref{s:dim1} addresses the one-dimensional case. Finally, Appendix~\ref{s:append} contains some folklore facts about martingales associated to a Feller process for which I could not find a precise reference.

\subsection*{On the optimality of Theorem~\ref{main}} There seems to be no good reason for the exponents $1/10$ and $1/5$ to appear in Theorem~\ref{main}, and it is only natural to suspect that they are not optimal. On one hand, it is easy to see that one cannot hope for a better bound than $t^{-1}$ in estimates (\ref{controlV}) and (\ref{controlJ}), so the results of Proposition~\ref{controlmart} are optimal for $d \ge 3$ (provided $\mu \le t^{-1/2}$, and up to the logarithmic correction when $d = 3$). One may then wonder about the optimality of Theorem~\ref{bemart} and its not-so-intuitive exponent~$1/5$ in~(\ref{eq:bemart}). It is proved in \cite{ha88} that this exponent is optimal. However, the example provided in \cite{ha88} to show optimality is such that the maximal martingale increment is of the same order of magnitude as the martingale itself. In our context, the example is not convincing, as the martingale $M_\mu$ has ``almost bounded'' jumps (for $d \ge 3$, they are in $\LL^p(\ov{\P})$ for any $p$ uniformly over $\mu$, as can be seen using part~(ii) of Theorem~\ref{t:go}). So the question of interest to us is whether the bound $V(M)^{1/5}$ on the r.h.s.\ of~(\ref{eq:bemart}) remains optimal even on the restricted class of martingales with bounded increments. This question is answered positively in \cite{martingale_CLT}, thus leaving no possibility for improvement. On the other hand, a control of higher moments of
$$
\frac{\langle M_\mu \rangle_t}{t} - \sigma_\mu^2
$$
could allow one to use the generalized form of Theorem~\ref{bemart} given in \cite{ha88} and possibly get better exponents, but a proof that would follow this line of argument eludes me.
%
%
%
%
%
%
%
\section{The martingale $M_\mu$ and its quadratic variation}
\label{s:quadvar}
\setcounter{equation}{0}
Let us define
\begin{equation}
\label{defvmu}
v_\mu(\omega) = \sum_{|z| = 1} \omega_{0,z} (\xi \cdot z + \phi_\mu(\theta_z \ \omega) - \phi_\mu(\omega))^2 .
\end{equation}
This section is devoted to the proof of the following result.
\begin{prop}
\label{p:quadvar}
The process $M_\mu$ is a martingale under $\ov{\P}$, whose quadratic variation is given by
\begin{equation}
\label{eq:quadvar}
\langle M_\mu \rangle_t = \int_0^t v_\mu(\omega(s)) \ \d s.
\end{equation}
\end{prop}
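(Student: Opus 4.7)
The strategy is the standard Dynkin/martingale-problem decomposition, first applied separately to $\xi\cdot X_t$ and to $\phi_\mu(\omega(t))$, then combined via the defining equation $(\mu-\L)\phi_\mu = \mathfrak{d}$. The jump structure of the resulting pure-jump martingale will then immediately give the claimed quadratic variation.

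First, I would work under a fixed environment $\omega$. Since $X_t$ jumps from $x$ to $x+z$ at rate $\omega_{x,x+z} = (\theta_x\omega)_{0,z}$, the generator applied to the linear function $x\mapsto\xi\cdot x$ is $\sum_{|z|=1}(\theta_x\omega)_{0,z}\,\xi\cdot z = \mathfrak{d}(\theta_x\omega)$. A standard application of Dynkin's formula (the folklore facts collected in Appendix~\ref{s:append}) then yields that
$$
N_1(t) \;:=\; \xi\cdot X_t - \int_0^t \mathfrak{d}(\omega(s))\,\d s
$$
is a local martingale under $\PPo_0$, hence also under $\ov{\P}$. Similarly, since $(\omega(t))_{t\ge0}$ is Markov with generator $\L$ and $\phi_\mu\in\LL^2(\P)$, the process
$$
N_2(t) \;:=\; \phi_\mu(\omega(t))-\phi_\mu(\omega(0))-\int_0^t\L\phi_\mu(\omega(s))\,\d s
$$
is a local martingale under $\ov{\P}$.

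Adding $N_1+N_2$ and using $\mathfrak{d}+\L\phi_\mu=\mu\phi_\mu$, which is exactly~(\ref{defphimu}), the two integrands collapse to $\mu\phi_\mu(\omega(s))$, and one recognises $N_1(t)+N_2(t) = M_\mu(t)$. Thus $M_\mu$ is a local martingale. To upgrade to a genuine square-integrable martingale under $\ov\P$, I would use stationarity of $\omega(s)$ under $\ov\P$ (because $\P$ is invariant for the environment-viewed-by-the-particle process) together with the facts that $\phi_\mu\in\LL^2(\P)$, $\L\phi_\mu = \mu\phi_\mu-\mathfrak{d}\in\LL^2(\P)$, and $\mathfrak{d}\in\LL^\infty(\P)$ thanks to (H2). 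A standard localisation plus dominated convergence then promotes $M_\mu$ to a square-integrable martingale on $[0,t]$ for every $t$.

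For the quadratic variation, I would exploit that $M_\mu$ is pure jump: the integral term in~(\ref{defMmu}) is continuous, and both $\xi\cdot X$ and $s\mapsto\phi_\mu(\omega(s))$ jump only when $X$ jumps. At a jump with $X_{s^-}=x$, $X_s=x+z$, one gets
$$
\Delta M_\mu(s) \;=\; \xi\cdot z + \phi_\mu(\theta_z\,\omega(s^-))-\phi_\mu(\omega(s^-)),
$$
occurring at rate $(\omega(s^-))_{0,z}$ conditional on the past. The general formula for the predictable quadratic variation of a pure-jump martingale gives
$$
\langle M_\mu\rangle_t \;=\; \int_0^t \sum_{|z|=1}(\omega(s))_{0,z}\bigl(\xi\cdot z+\phi_\mu(\theta_z\,\omega(s))-\phi_\mu(\omega(s))\bigr)^2 \,\d s,
$$
which is exactly $\int_0^t v_\mu(\omega(s))\,\d s$.

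The main obstacle is purely technical: $\phi_\mu$ is not bounded, only in $\LL^2(\P)$, so both the identification of $N_2$ as a martingale and the identification of $\langle M_\mu\rangle$ require justification beyond the bounded-generator case. I expect this to be handled cleanly by the Feller-process folklore of Appendix~\ref{s:append}, combined with the $\ov\P$-stationarity of $(\omega(s))$ and the deterministic bounds $1\le\omega_{0,z}\le M$.
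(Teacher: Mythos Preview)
Your proof is correct in outline and arrives at the same result, but it takes a different route from the paper and leans on a tool that Appendix~\ref{s:append} does not actually provide.

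The paper does \emph{not} split $M_\mu$ into $N_1+N_2$. Instead it bundles everything into a single function on $\Z^d$: for fixed $\omega$ it sets $h^\omega(x)=\xi\cdot x+\phi_\mu(\theta_x\,\omega)$, checks that $L^\omega h^\omega(x)=\mu\,\phi_\mu(\theta_x\,\omega)$ (this is where~(\ref{defphimu}) enters, applied at every translate $\theta_x\,\omega$ via countability of $\Z^d$), and then observes $M_{h^\omega}=M_\mu$. The unboundedness of $h^\omega$ is dealt with by a separate Proposition~\ref{quadvargen}: the ergodic theorem guarantees that $x\mapsto\phi_\mu(\theta_x\,\omega)$, and hence $h^\omega$, has subexponential growth for a.e.\ $\omega$; a truncation $f_n=f\,\1_{B_n}$ combined with the exponential tail of the exit time from $B_n$ then upgrades the bounded-case Dynkin result to subexponential $f$. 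This yields the martingale property and $\langle M_\mu\rangle_t=\int_0^t\Gamma_{h^\omega}(X_s)\,\d s$ \emph{quenched}, i.e.\ under $\PP^\omega_0$ for a.e.\ $\omega$; the $\ov\P$-statement then follows by checking integrability via $\phi_\mu\in\LL^2(\P)$.

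Your approach instead works essentially annealed and invokes an $\LL^2$-Dynkin formula for the environment process. That is legitimate---since $\phi_\mu$ and $\L\phi_\mu=\mu\phi_\mu-\mathfrak{d}$ are both in $\LL^2(\P)$, standard $\LL^2$-semigroup theory does give that $N_2$ is a martingale under $\ov\P$---but this is \emph{not} what Appendix~\ref{s:append} contains: Proposition~\ref{p:feller} there is stated for $f\in\mcl{D}(L)\subset\mcl{C}(S)$, i.e.\ bounded continuous functions in the Feller framework, and so cannot be cited directly for $\phi_\mu$. You correctly flag this as the main obstacle, but your proposed resolution (``Feller-process folklore of Appendix~\ref{s:append}'') is the wrong reference. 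The paper's truncation-on-$\Z^d$ argument sidesteps this entirely and, as a bonus, delivers the stronger quenched statement.
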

In order to prove Proposition~\ref{p:quadvar}, we will in fact show a more general result. 
For any function $f : \Z^d \to \R$, let 
\begin{equation}
\label{defMf}
M_f(t) = f(X_t) - f(X_0) - \int_0^t L^\omega f(X_s) \ \d s,
\end{equation}
and let us define the \emph{carré du champ} of $f$ as 
$$
\Gamma_f(x) = (L^\omega f^2 - 2 f L^\omega f)(x) = \sum_{y \sim x} \omega_{x,y} (f(y) - f(x))^2.
$$
Let $B_n = \{-n,\ldots,n\}$ be the box of size $n$, and let us say that a function $f : \Z^d \to \R$ has \emph{subexponential growth} if for any $\alpha > 0$, $\sup_ {B_n} |f| = O(e^{\alpha n})$. 
\begin{prop}
\label{quadvargen}
Let $\omega$ be any environment satisfying the ellipticity condition (H2). If $f : \Z^d \to \R$ has subexponential growth, then $M_f$ defined in (\ref{defMf}) is a martingale under $\PPo_0$, whose quadratic variation is given by
$$
\langle M_f \rangle _t = \int_0^t \Gamma_f(X_s) \ \d s.
$$
\end{prop}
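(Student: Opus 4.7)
The plan is to reduce Proposition~\ref{quadvargen} to the classical Dynkin formula for bounded functions, by localizing the walk and truncating $f$, and then to deduce the quadratic variation identity by applying the same result to $f^2$.

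First I would establish a tail bound on the walk. Assumption~(H2) ensures that the total exit rate $\sum_{y \sim x}\omega_{x,y}$ is bounded by $2dM$ uniformly in $x$, so the number $N_t$ of jumps in $[0,t]$ is stochastically dominated by a Poisson random variable of parameter $2dMt$. Since each jump moves the walk by one lattice step, $\sup_{s \le t}\|X_s\|_1 \le N_t$ under $\PPo_0$, and in particular $\EEo_0[\exp(\alpha \sup_{s \le t}\|X_s\|_1)] < \infty$ for every $\alpha>0$. Combined with the subexponential growth of $f$ (which is inherited by $L^\omega f$, $f^2$ and $L^\omega f^2$), this yields uniform $\LL^p$-bounds on $f(X_s)$ and $L^\omega f(X_s)$ for every $p \ge 1$, for $s$ ranging over $[0,t]$.

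Next I would localize. Let $\tau_n = \inf\{s \ge 0 : \|X_s\|_1 \ge n\}$; by the previous step, $\tau_n \to \infty$ almost surely. Choose any bounded $f_n : \Z^d \to \R$ that agrees with $f$ on $B_{n+1}$. On $\{s \le t \wedge \tau_n\}$ the walk lies in $B_n$, hence all of its neighbours lie in $B_{n+1}$, so $L^\omega f_n(X_s) = L^\omega f(X_s)$ and $f_n(X_s) = f(X_s)$, which gives $M_{f_n}(t \wedge \tau_n) = M_f(t \wedge \tau_n)$. For bounded $f_n$, $L^\omega$ acts as a bounded operator on $\ell^\infty(\Z^d)$ and the classical Dynkin formula asserts that $M_{f_n}$ is a martingale; optional stopping then makes $(M_f(t \wedge \tau_n))_t$ a martingale. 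Sending $n \to \infty$, the almost sure convergence $M_f(t \wedge \tau_n) \to M_f(t)$ combined with the uniform $\LL^2$-bounds from the first step supplies dominated convergence, so $M_f$ is a square-integrable martingale under $\PPo_0$.

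For the quadratic variation, the same argument applied to $f^2$ gives that $M_{f^2}$ is a martingale. Since the integral term in (\ref{defMf}) is continuous, the jumps of $M_f$ are $\Delta M_f(s) = f(X_s)-f(X_{s^-})$, and hence
$$
[M_f]_t = \sum_{0 < s \le t} \bigl(f(X_s)-f(X_{s^-})\bigr)^2.
$$
Using the algebraic identity $(a-b)^2 = a^2 - b^2 - 2b(a-b)$, telescoping the first two terms via $f(X_t)^2 - f(X_0)^2 = M_{f^2}(t) + \int_0^t L^\omega f^2(X_s)\,\d s$, and rewriting the sum of the remaining contributions as $2\int_0^t f(X_{s^-})\,\d M_f(s) + 2\int_0^t f(X_s) L^\omega f(X_s)\,\d s$, one obtains
$$
[M_f]_t - \int_0^t \Gamma_f(X_s)\,\d s = M_{f^2}(t) - 2\int_0^t f(X_{s^-})\,\d M_f(s),
$$
which is a local martingale. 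Because $\int_0^t \Gamma_f(X_s)\,\d s$ is continuous and adapted, it is the predictable compensator $\langle M_f \rangle_t$ of $[M_f]$, which gives the desired formula. The main obstacle is really only the unboundedness of $f$: everything else is standard pure-jump theory, but closing the truncation argument and promoting the last local martingale to a genuine martingale both rely on the exponential tail bound on $\sup_{s \le t}\|X_s\|_1$ from step one.
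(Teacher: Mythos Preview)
Your proof is correct and rests on the same two ingredients as the paper's argument: the bounded-function case (Dynkin's formula) together with the exponential tail bound on $\sup_{s\le t}\|X_s\|_1$ coming from (H2). The mechanics differ slightly. The paper truncates by setting $f_n = f\,\1_{B_n}$ and shows directly that $M_{f_n}(t)\to M_f(t)$ and $\int_0^t\Gamma_{f_n}(X_s)\,\d s\to\int_0^t\Gamma_f(X_s)\,\d s$ in $\LL^p(\PPo_0)$, using that $\PPo_0[X_s\notin B_n]\le e^{-cn}$ beats the subexponential growth of $f$ and $\Gamma_f$. You instead localize with the exit times $\tau_n$ and a truncation agreeing with $f$ on $B_{n+1}$, which gives the exact identity $M_{f_n}(t\wedge\tau_n)=M_f(t\wedge\tau_n)$ and avoids having to track the boundary discrepancy between $L^\omega f_n$ and $(L^\omega f)\1_{B_n}$. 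For the quadratic variation, the paper simply passes the bounded-case formula $\langle M_{f_n}\rangle_t=\int_0^t\Gamma_{f_n}(X_s)\,\d s$ to the limit in $\LL^p$, whereas you compute $[M_f]_t$ explicitly from the jumps and identify $\int_0^t\Gamma_f(X_s)\,\d s$ as its continuous predictable compensator via the decomposition $[M_f]_t-\int_0^t\Gamma_f=M_{f^2}(t)-2\int_0^t f(X_{s^-})\,\d M_f(s)$. Both routes are standard; yours is a touch more explicit about the pure-jump structure, while the paper's is shorter once one grants the bounded case.
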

\begin{proof}
This statement is folklore if one assumes that $f$ is bounded, and is recalled in Appendix~\ref{s:append}. For a general $f$ of subexponential growth, let $f_n = f \1_{B_n}$. We begin by showing that $f_n(X_s)$ converges to $f(X_s)$ in $\LL^p(\PPo_0)$ for any $p > 0$, uniformly over $s \in [0,t]$. It is easy to check that, for any fixed $t \ge 0$, there exists $c > 0$ such that for any $s \le t$ and any $n$,
\begin{equation}
\label{controlexp}
\PPo_0[X_s \notin B_n] \le e^{-cn}.
\end{equation}
Indeed, this probability is bounded by the event that more that $n$ jumps occur before time $t$. As the jump rates are uniformly bounded, the number of jumps before time $t$ is dominated by a Poisson random variable, which has an exponential tail. Observe now that, for any $p > 0$,
\begin{equation}
\label{convLp}
\PPo_0\left[ \big| f_n(X_s) - f(X_s) \big|^p \right] \le \sum_{k = n}^{+\infty}  \PPo_0[X_s \in B_{k+1} \setminus B_k] \ \sup_{B_{k+1}} |f|^p.
\end{equation}
Estimate (\ref{controlexp}) and the fact that $f$ has subexponential growth together ensure that the right-hand side of (\ref{convLp}) indeed converges to $0$ as $n$ tends to infinity, uniformly over $s \in [0,t]$. 

From this observation, it is straightforward to conclude that $M_{f_n}(t)$ converges to $M_f(t)$ in $\LL^p(\PPo_0)$ for any $p$, and in particular, $M_f$ is indeed a martingale. Moreover, $\Gamma_f$ has also subexponential growth, so $\int_0^t \Gamma_{f_n}(X_s)  \d s$ converges to $\int_0^t \Gamma_{f}(X_s)  \d s$ in $\LL^p(\PPo_0)$ for any $p$, and the limit is thus the quadratic variation of $M_f$ at time $t$.
\end{proof}
\begin{proof}[Proof of Proposition~\ref{p:quadvar}]
Let $h^\omega(x) = \xi \cdot x + \phi_\mu(\theta_x \ \omega)$, and let us show that, for almost every environment, one has $M_{h^\omega} = M_\mu$ $\PPo_0$-a.s., where $M_\mu$ was defined in (\ref{defMmu}). This boils down to checking that, for almost every environment, 
\begin{equation}
\label{check1}
\forall x \in \Z^d, \quad L^\omega h^\omega(x) = \mu \phi_\mu (\theta_x \ \omega).
\end{equation}
Observe that
$$
L^\omega h^\omega(x) = \mathfrak{d}(\theta_x \ \omega) + \L \phi_\mu (\theta_x \ \omega),
$$
where $\mathfrak{d}$ is defined in (\ref{defmfkd}). We learn from the definition of $\phi_\mu$ given in (\ref{defphimu}) that, for almost every $\omega$, 
$$
\mathfrak{d}(\omega) + \L \phi_\mu(\omega) = \mu \phi_\mu(\omega).
$$
That this relation holds with probability $1$ if one replaces $\omega$ by any $\theta_x \ \omega$, $x \in \Z^d$, is a consequence of the fact that $\Z^d$ is countable, so identity (\ref{check1}) indeed holds almost surely. Moreover, as $\phi_\mu$ is integrable, the ergodic theorem ensures that $h^\omega$ has subexponential growth for almost every $\omega$, so we can apply Proposition~\ref{quadvargen}. Noting that $\Gamma_{h^\omega}(x) = v_\mu(\theta_x \ \omega)$, we thus obtain that, for almost every $\omega$, $M_\mu$ is a martingale under $\PPo_0$ whose quadratic variation is given by (\ref{eq:quadvar}). Proposition~\ref{p:quadvar} is a statement under the measure $\ov{\P}$ however. What we need in order to conclude is to check integrability, but this is straightforward due to the fact that $\phi_\mu$ is in $\LL^2(\P)$.
\end{proof}
%
%
%
%
%
%
%
\section{Polynomial decay along the semi-group}
\label{s:decay}
\setcounter{equation}{0}
As was seen in Proposition~\ref{p:quadvar}, the quadratic variation of the martingale $M_\mu$ is driven by the function $v_\mu$. In order to prove inequality (\ref{controlV}) of Proposition~\ref{controlmart}, we begin by investigating the image of $v_\mu$ by the semi-group associated with $(\omega(t))_{t \ge 0}$. Let us define
$$
v_{\mu,t}(\omega) = \EEo_0[v_\mu(\omega(t))].
$$
We are interested in the convergence to $0$ of the variance of $v_{\mu,t}$, as $t$ tends to infinity. We write $\var$ for the variance with respect to $\P$.
\begin{thm}
\label{p:decay}
There exists $C > 0$ such that for any $\mu, t  > 0$, 
\begin{equation}
\label{e:decay}
\var[v_{\mu,t}] \le 
\left|
\begin{array}{ll}
C \log_+^q(\mu^{-1})\left( 1/ \sqrt{t} + \mu^2 \right) & \text{if } d = 2, \\
C \left( \log_+(t)/t + \mu^2 \right)  & \text{if } d = 3, \\
C \left(1/t + \mu^2 \right) & \text{if } d \ge 4, 
\end{array}
\right.
\end{equation}
and moreover, 
\begin{equation}
\label{integrable}
\int_0^{t} \var[v_{\mu,s}] \ \d s \le 
\left|
\begin{array}{ll}
C \log_+^q(\mu^{-1}) \left(  \sqrt{t} + \mu^2 t \right) & \text{if } d = 2, \\
C \left(\log_+(t)  + \mu^2 t \right) & \text{if } d = 3, \\
C \left(1 +   \mu^2 t \right) & \text{if } d \ge 4. 
\end{array}
\right.
\end{equation}
\end{thm}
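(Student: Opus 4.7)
The plan is to view $v_{\mu,t} = P_t v_\mu$, where $P_t$ is the Markov semigroup of the environment viewed by the particle $(\omega(t))$, and to reduce the problem to a spectral estimate on $-\L$. Since $\P$ is reversible and ergodic for $P_t$, setting $g = v_\mu - \E v_\mu$ gives
$$
\var[v_{\mu,t}] = \langle P_t g,\, P_t g\rangle_{\LL^2(\P)} = \langle P_{2t} g,\, g\rangle_{\LL^2(\P)} = \int_0^{+\infty} e^{-2t\lambda}\,\d E_g(\lambda),
$$
where $E_g$ is the spectral measure of $g$ with respect to $-\L$. Controlling $\var[v_{\mu,t}]$ is therefore equivalent to controlling how the mass of $E_g$ is distributed near $0$, or equivalently to estimating $\langle(s+\L)^{-1}g,g\rangle$ for small $s>0$.

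First I would extract the spatial input from \cite{go11a}. The corrector estimates of Gloria--Otto (invoked via Theorem~\ref{t:go} and the modification flagged in Appendix~\ref{a:go}) control the $\LL^p$-moments of $\phi_\mu$ and of its vertical derivatives $\partial_e \phi_\mu$ with respect to individual conductances. Combined with the explicit expression (\ref{defvmu}) of $v_\mu$ and a standard Efron--Stein/covariance-sum argument, this produces a bound on the stationary spatial covariance of $v_\mu$, of the form
$$
\sum_{x \in \Z^d} \left|\mathrm{Cov}\bigl(v_\mu(\omega),\, v_\mu(\theta_x\,\omega)\bigr)\right| \le C\,\Psi_d(\mu),
$$
with $\Psi_d(\mu) = \log_+^q(\mu^{-1})$ in $d = 2$ and $\Psi_d(\mu) = O(1)$ in $d \ge 3$. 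The same derivative estimates yield a resolvent-type bound $\langle(\mu+\L)^{-1}g, g\rangle \le C\,\Psi_d(\mu)$, which pushes the spectral mass of $g$ away from the origin.

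Second I would apply the Nash / resolvent-comparison mechanism of \cite{vardecay}. The ellipticity condition (H2) gives a uniform Nash inequality for $L^\omega$ and hence an on-diagonal heat-kernel bound $\PPo_0[X_t = 0] \le C t^{-d/2}$. Via the identity $\omega(t) = \theta_{X_t}\,\omega$ and a Plancherel argument on $\Z^d$ applied to the stationary field $(g(\theta_x\omega))_x$, this transfers into a decay of $\langle P_t g, g\rangle$ driven by the product of the diagonal heat kernel with the spatial covariance bound above. Combined with the resolvent estimate, an interpolation (splitting the spectral integral at $\lambda \sim 1/t$) then produces (\ref{e:decay}); the residual $\mu^2$ term appears because $v_\mu$ only decorrelates on scales $\lesssim \mu^{-1/2}$, which is the natural length scale of the massive term in (\ref{defphimu}), so for $t \gtrsim \mu^{-1}$ the decay saturates. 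The integrated bound (\ref{integrable}) I would derive not by integrating (\ref{e:decay}) in time (which would produce a spurious $(\log t)^2$ in $d = 3$), but from the identity
$$
\int_0^t \var[v_{\mu,s}]\,\d s = \int_0^{+\infty} \frac{1-e^{-2t\lambda}}{2\lambda}\,\d E_g(\lambda),
$$
split again at $\lambda \sim 1/t$ and bounded on the high-spectrum side by the resolvent estimate above.

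The main obstacle I expect is the passage from the corrector bounds of \cite{go11a}, which control $\phi_\mu$ and its vertical derivatives, to analogous bounds on the quadratic functional $v_\mu$: differentiating (\ref{defvmu}) produces cross terms of the type $(\xi\cdot z + \phi_\mu \circ \theta_z - \phi_\mu)(\partial_e \phi_\mu)$, whose moments must be controlled uniformly in $\mu$. This is precisely the content of the modification signalled in Appendix~\ref{a:go}. A secondary technical point is tracking the $\mu^2$ correction: since no $\phi_0$ exists in $d \le 2$, this term has to be produced through a self-contained $v_\mu$ vs.\ $v_{\mu/2}$ comparison rather than by passing to a limit, in a way that is compatible with the spectral framework above.
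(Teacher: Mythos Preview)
Your high-level architecture (spatial decorrelation of the field $\Rightarrow$ temporal decay along the semigroup, via the Nash/resolvent machinery of \cite{vardecay}) matches the paper's. But the concrete implementation has two genuine gaps.

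\medskip
\textbf{The energy density $w_\mu$ versus $v_\mu$.} The spatial input that the paper actually has is Theorem~\ref{t:go}(i), a variance bound for box averages of the \emph{energy density}
\[
w_\mu \;=\; \mu\,\phi_\mu^2 \;+\; v_\mu,
\]
not for $v_\mu$ itself. The Gloria--Otto argument relies on $w_\mu(\theta_x\,\omega)$ being the local energy of the regularized corrector equation; $v_\mu$ alone does not enjoy this structure. Appendix~\ref{a:go} is not, as you suggest, about vertical derivatives of $v_\mu$: it only adapts the averaging function $\eta_L$ in \cite{go11a} from a smooth bump to the indicator $\1_{B_L}/|B_L|$, at the cost of one power of $L$. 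Your proposed Efron--Stein route on $v_\mu$ would have to be built from scratch; it is not what \cite{go11a} or Appendix~\ref{a:go} provide.

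\medskip
\textbf{Where the $\mu^2$ comes from.} In the paper the $\mu^2$ term does not arise from any ``saturation'' of correlations of $v_\mu$ or from a $v_\mu$--$v_{\mu/2}$ comparison. It comes for free from the splitting
\[
\var[v_{\mu,t}] \;\le\; 2\,\var[w_{\mu,t}] \;+\; 2\,\var\bigl[(\mu\phi_\mu^2)_t\bigr]
\;\le\; 2\,\var[w_{\mu,t}] \;+\; 2\,\mu^2\,\E[\phi_\mu^4],
\]
the last term being handled by Theorem~\ref{t:go}(ii). All the temporal analysis is then done on $w_\mu$, for which one has clean decay with no residual $\mu$-term. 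This is a much simpler mechanism than the one you sketch.

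\medskip
\textbf{A quantitative issue with your covariance claim.} You assert $\sum_{x}\bigl|\mathrm{Cov}(v_\mu,\,v_\mu\circ\theta_x)\bigr|\le C$ for $d\ge 3$. But Theorem~\ref{t:go}(i) gives $\var\bigl[S_n(\ov w_\mu)/|B_n|\bigr]\lesssim n^{1-d}$, which is consistent with covariances of size $|x|^{1-d}$; these are \emph{not} absolutely summable on $\Z^d$ in any dimension. The paper never needs summability: it feeds the box-average variance bound directly into \cite[Proposition~7.1]{vardecay} for the simple random walk, then transfers to the true walk by the resolvent comparison \cite[Lemma~5.1]{vardecay}. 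Your resolvent bound $\langle(\mu-\L)^{-1}g,g\rangle\le C$ is stated only at the parameter $\mu$, whereas the time-integrated estimate requires it at $1/t$; and with the correct covariance decay it would in fact produce the $\log_+(t)$ you see in $d=3$.
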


The idea of the proof of Theorem~\ref{p:decay} is inspired by \cite{vardecay}, with a crucial input from \cite{go11a}. Let us write $w_\mu = \mu \phi_\mu^2 + v_\mu$, and $\ov{w}_\mu = w_\mu - \E[w_\mu]$. For any function $g : \Omega \to \R$, let 
$$
S_n(g) = \sum_{x \in B_n} g(\theta_x \ \omega).
$$ 

\begin{thm}[\cite{go11a}]
\label{t:go}
\begin{itemize}
\item[(i)] There exists $C > 0$ such that, for any $n \in \N$ and any $\mu > 0$,
\begin{equation*}
\E\left[ \left(\frac{S_n(\ov{w}_\mu)}{|B_n|} \right)^2 \right] \le 
\left|
\begin{array}{ll}
C \log_+^q(\mu^{-1}) n^{-1} & \text{if } d = 2, \\
C  n^{1-d} & \text{if } d \ge 3,
\end{array}
\right.
\end{equation*}
where we write $|B_n|$ to denote the cardinal of the box $B_n$. 
\item[(ii)] For any $p > 0$, there exists $q \ge 0$ such that
$$
\E\left[\phi_\mu^p\right] =
\left|
\begin{array}{ll}
O\big( \log^q(\mu^{-1}) \big) & \text{if } d = 2,\\
O(1) & \text{if } d \ge 3.
\end{array}
\right.
$$ 
\end{itemize}
\end{thm}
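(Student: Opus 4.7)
The plan is to derive part~(i) from an Efron--Stein-type (spectral gap) inequality for the product measure $\P$:
$$
\var[F] \le C \sum_{e \in \mathbb{B}} \E\!\left[ \left(F - F^{(e)}\right)^2 \right],
$$
where $F^{(e)}$ denotes $F$ with the conductance $\omega_e$ resampled from an independent copy. Applied to $F = S_n(\ov{w}_\mu)/|B_n|$, and using that $w_\mu = \mu \phi_\mu^2 + v_\mu$ depends on the environment only through $\phi_\mu$ and its nearest-neighbour differences, this reduces part~(i) to bounding the sensitivity of $\phi_\mu(\theta_x \omega)$ to a perturbation of a single edge $e$, summed over $e \in \mathbb{B}$ and $x \in B_n$.

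To estimate such a sensitivity I would differentiate the defining relation $(\mu - \L)\phi_\mu = \mathfrak{d}$ with respect to $\omega_e$, obtaining
$$
(\mu - \L)\bigl(\partial_{\omega_e} \phi_\mu\bigr) = \partial_{\omega_e} \mathfrak{d} - (\partial_{\omega_e} \L)\phi_\mu,
$$
whose right-hand side is supported at the endpoints of $e$ and depends linearly on $\phi_\mu$ there. Inversion by the massive Green's function $G_\mu = (\mu - \L)^{-1}$ then expresses $\partial_{\omega_e}\phi_\mu(\theta_x \omega)$ as $G_\mu$ evaluated between $x$ and $e$, times a random factor controlled via part~(ii). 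The annealed decay $|G_\mu(x,y)| \le C|x-y|^{2-d}$ for $d \ge 3$ (with a logarithmic profile in $d = 2$), tempered by an exponential cutoff at scale $\mu^{-1/2}$ coming from the mass, lets one carry out the double sum over $e$ and $x$; a convolution computation yields $O(n^{1-d})$ for $d \ge 3$ and $O(\log_+^q(\mu^{-1})\, n^{-1})$ for $d = 2$.

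For part~(ii), the baseline is the energy identity $\mu \E[\phi_\mu^2] + \E[\Gamma_{\phi_\mu}(0)] \le C$ obtained by testing~(\ref{defphimu}) against $\phi_\mu$ and using~(H2). Higher moments then follow from a Meyers-type higher-integrability bound on $\nabla \phi_\mu$ (uniform in $\mu$ thanks to the uniform ellipticity of $\mu - \L$), combined with the spectral gap inequality applied to $\phi_\mu$ itself: the latter bounds $\var[\phi_\mu]$ by a sum of single-edge sensitivities estimable as in part~(i), and iterating upgrades $\LL^2$ control into $\LL^p$ control for every $p > 0$, with an additional logarithmic cost appearing only in the recurrent case $d = 2$.

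The main obstacle is the uniform-in-$\mu$ control of $G_\mu$ and its discrete gradients in the random environment, with the sharp $d$-dependence. The marginal case $d = 2$ must be tracked especially delicately, since the Green's function has no pointwise decay there and only the exponential cutoff at $\mu^{-1/2}$ renders the sums finite, at the price of powers of $\log(\mu^{-1})$; the bookkeeping of these logarithms, and the fact that the rate one obtains is $n^{1-d}$ rather than the i.i.d.\ CLT rate $n^{-d}$ (reflecting the slow spatial decay of correlations of $w_\mu$ inherited from those of $\phi_\mu$), is where the technical core of~\cite{go11a} lies.
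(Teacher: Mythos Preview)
Your outline is a reasonable sketch of the Gloria--Otto machinery, but you should be aware that the paper does not actually prove Theorem~\ref{t:go}: both parts are quoted as results from~\cite{go11a} (Theorem~2.1 for part~(i), Proposition~2.1 for part~(ii)), and the only argument the paper supplies is Appendix~\ref{a:go}. That appendix does not redo the variance estimate; it explains why the exponent in part~(i) is $n^{1-d}$ rather than the $n^{-d}$ obtained in~\cite{go11a}. The reason is that \cite{go11a} assumes an averaging kernel $\eta_L$ with $\|\nabla\eta_L\|_\infty \lesssim L^{-d-1}$, whereas the indicator $\eta_L = \mathbf{1}_{B_L}/|B_L|$ used here only satisfies $\|\nabla\eta_L\|_\infty \lesssim L^{-d}$. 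The appendix tracks this weaker bound through steps~5--7 of the proof in~\cite{go11a} and shows it costs exactly one power of $L$.

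Your proposed route---Efron--Stein/spectral gap, differentiation of the resolvent equation, Green's function decay, and Meyers estimates for higher moments---is indeed the strategy of~\cite{go11a}, so in spirit you are reconstructing the cited proof rather than what the present paper does. One point to flag: you attribute the loss from $n^{-d}$ to $n^{1-d}$ to ``slow spatial decay of correlations of $w_\mu$''. In the paper's accounting the loss comes purely from the roughness of the averaging kernel at the boundary of $B_n$, not from any intrinsic correlation decay; with a smooth $\eta_L$ the full $n^{-d}$ is recovered in~\cite{go11a}. If you intend to supply a self-contained proof, that distinction matters for where the extra factor of $n$ actually enters.
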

Part (i) of Theorem~\ref{t:go} should inform us about the decorrelation properties of the family of random variables $(v_\mu(\theta_x \ \omega))_{x \in \Z^d}$. The proof of the estimate unfortunately requires that $v_\mu$ be replaced by $w_\mu$, which is the energy density deriving from the elliptic difference equation defining $\phi_\mu$. The result is essentially given in \cite[Theorem~2.1]{go11a}, up to a minor modification which is commented on in Appendix~\ref{a:go}. Part (ii) comes from \cite[Proposition~2.1]{go11a}. 
\begin{proof}[Proof of Theorem~\ref{p:decay}]
We need to transfer the information on the spatial decorrelations of $(w_\mu(\theta_x \ \omega))_{x \in \Z^d}$ given by part (i) of Theorem~\ref{t:go} into time decorrelations for the action of the semi-group on $w_\mu$. This is achieved using techniques from \cite{vardecay}, that are based on Nash inequalities and comparisons of resolvents. Let us define $w_{\mu,t} = \EEo_0[w_\mu(\omega(t))]$, and $\ov{w}_{\mu,t} = w_{\mu,t} - \E[w_{\mu,t}] = \EEo_0[\ov{w}_\mu(\omega(t))]$. Let $(X^\circ_t)_{t \ge 0}$ be the simple random walk (its jump rates are uniformly equal to $1$), whose distribution starting from $0$ we write $\PP_0$, and let $\ov{w}_{\mu,t}^\circ = \EE_0[\ov{w}_\mu(\theta_{X_t^\circ} \ \omega)]$. We learn from \cite[Proposition~4.1]{vardecay} that the function 
$t \mapsto \E[S_n(\ov{w}_{\mu,t}^\circ)]$ is decreasing. As a consequence, combining \cite[Proposition~7.1]{vardecay} with part (i) of Theorem~\ref{t:go}, we obtain that there exists $C > 0$ such that
\begin{equation}
\label{estimcirc}
\E[(\ov{w}_{\mu,t}^\circ)^2] \le 
\left|
\begin{array}{ll}
C \log_+^q(\mu^{-1}) \ t^{-1/2} & \text{if } d = 2, \\
C  \ t^{-(d-1)/2} & \text{if } d \ge 3.
\end{array}
\right.
\end{equation}
We then use the resolvents comparison between the simple random walk and the original one given by~\cite[Lemma~5.1]{vardecay}, that we recall here: for any $\lambda > 0$, one has
$$
\int_0^{+ \infty} e^{- \lambda s} \E[(\ov{w}_{\mu,s})^2] \ \d s \le \int_0^{+ \infty} e^{- \lambda s} \E[(\ov{w}_{\mu,s}^\circ)^2] \ \d s.
$$
This inequality holds due to the fact that we assume the conductances to be uniformly bounded from below by $1$ (see assumption (H2)). Indeed, in this case, the Dirichlet form associated to $(\omega(t))_{t \ge 0}$ dominates the Dirichlet form associated with the environment seen by the simple random walk. 

Choosing $\lambda = 1/t$ and using (\ref{estimcirc}) in the above inequality proves that
\begin{equation}
\label{integrable2}
\int_0^{t} \E[(\ov{w}_{\mu,s})^2] \ \d s \le 
\left|
\begin{array}{ll}
C \log_+^q(\mu^{-1}) \sqrt{t} & \text{if } d = 2, \\
C \log_+(t) & \text{if } d = 3, \\
C  & \text{if } d \ge 4. 
\end{array}
\right.
\end{equation}

In order to get inequality (\ref{integrable}), we observe that
$$
\var[v_{\mu,t}] = \var\left[ \left(w_\mu - \mu \phi_\mu^2  \right)_t \right],
$$
where we write $(\cdot)_t$ to denote the action of the semi-group at time $t$. This is bounded by
$$
2 \var\left[w_{\mu,t}\right] + 2 \var \left[ \left(\mu \phi_\mu^2  \right)_t \right].
$$
The first term of this sum is controlled by (\ref{integrable2}). The semi-group being a contraction in $L^2(\P)$, the second term is smaller than 
$$
\mu^2 \var\left[ \phi_\mu^2 \right] \le \mu^2 \E[\phi_\mu^4].
$$
Using part (ii) of Theorem~\ref{t:go} with $p = 4$, we bound this quantity by a constant times
$$
\left|
\begin{array}{ll}
\mu^2 \log_+^q(\mu^{-1}) & \text{if } d = 2, \\
\mu^2 & \text{otherwise},
\end{array}
\right.
$$
thus obtaining (\ref{integrable}). Claim (\ref{e:decay}) follows using the fact that the function $t \mapsto \var[v_{\mu,t}]$ is decreasing, as in the proof of \cite[Theorem~2.2]{vardecay}.
\end{proof}
%
%
%
%
%
%
%
\section{Fluctuations of the quadratic variation: a proof of (\ref{controlV})}
\label{s:fluctqv}
\setcounter{equation}{0}
\begin{proof}[Proof of estimate (\ref{controlV}) of Proposition~\ref{controlmart}]Combining the result of Proposition~\ref{p:quadvar} with the observation that $\E[v_\mu] = \sigma_\mu^2$, we have
$$
\ov{\E}\left[ \left( \frac{\langle M_\mu \rangle_t}{t} - \sigma_\mu^2 \right)^2 \right] = \frac{1}{t^2} \ \ov{\E}\left[ \left( \int_0^t \ov{v}_\mu(\omega(s)) \ \d s \right)^2 \right],
$$
where we define $\ov{v}_\mu(\omega)$ to be ${v}_\mu(\omega) - \E[{v}_\mu]$. Moreover, one has
\begin{eqnarray*}
\ov{\E}\left[ \left( \int_0^t \ov{v}_\mu (\omega(s)) \ \d s \right)^2 \right] & = & 2 \int_{0 \le s \le u \le t} \ov{\E}[ \ov{v}_\mu(\omega(s)) \ov{v}_\mu(\omega(u))] \ \d s \ \d u\\
& = & 2 \int_{0 \le s \le u \le t} \ov{\E}[ \ov{v}_\mu(\omega(0)) \ov{v}_\mu(\omega(u-s))] \ \d s \ \d u,
\end{eqnarray*}
using the stationarity of $(\omega(s))$. By a change of variables (and using the fact that $\ov{\E} = \E\EEo_0$), the latter becomes
\begin{equation*}
2 \int_0^t (t-s) \E[ \ov{v}_\mu(\omega) \ov{v}_{\mu,s}(\omega)] \ \d s,
\end{equation*}
where we write $\ov{v}_{\mu,t}(\omega) = v_{\mu,t}(\omega) - \E[v_{\mu,t}] = \EEo_0[\ov{v}_\mu(\omega(t))]$.
As the measure $\P$ is reversible for the process $(\omega(t))_{t \ge 0}$, the associated semi-group is self-adjoint in $\LL^2(\P)$, and the latter integral thus becomes
$$
2 \int_0^t (t-s) \E\left[\left(\ov{v}_{\mu,s/2}\right)^2\right] \ \d s,
$$
which can be bounded by $2 t \int_0^t  \E[(\ov{v}_{\mu,s/2})^2]  \d s$.
Estimate (\ref{controlV}) now follows from Theorem~\ref{p:decay}.
\end{proof}
%
%
%
%
%
%
%
\section{Jumps of the martingale: a proof of (\ref{controlJ})}
\label{s:jump}
\setcounter{equation}{0}
The aim of this section is to prove estimate (\ref{controlJ}) of Proposition~\ref{controlmart}, which concerns the jumps of the martingale $M_\mu$. A crucial input of the proof is a result from \cite{go11a} that we recalled as part (ii) of Theorem~\ref{t:go}. 

Let $(Y_n)_{n \in \N}$ be the sequence of sites visited by the random walk $(X_t)_{t \ge 0}$, and let $(T_n)_{n \in \N}$ be the sequence of jump instants (with $T_0 = 0$), so that
$$
X_t = Y_n \quad \text{iff} \quad T_n \le t < T_{n+1}.
$$
We can rewrite the sum that interests us using $Y_n$ and $T_n$,
$$
\sum_{0 \le s \le t} \Delta M_\mu(s)^4 = \sum_{n \in \N} \Delta M_\mu(T_{n+1})^4 \ \1_{\{ T_{n+1} \le t \}}.
$$
Let 
$$
d_\mu(\omega) = |\xi| + \sum_{|z| = 1} \big| \phi_\mu(\theta_z \ \omega) - \phi_\mu(\omega) \big|.
$$
An examination of the definition (\ref{defMmu}) of $M_\mu$ shows that 
$$
\big| \Delta M_\mu(T_{n+1}) \big| \le d_\mu(\theta_{Y_n} \ \omega),
$$
so we obtain
\begin{equation}
\label{deltadmu}
\sum_{0 \le s \le t} \Delta M_\mu(s)^4 \le \sum_{n \in \N} d_\mu^4(\theta_{Y_n} \ \omega) \ \1_{\{ T_{n+1} \le t \}}.
\end{equation}
\begin{lem}
\label{lemsumint}
There exists $C > 0$ such that for any positive function $f : \Z^d \to \R$ and any environment $\omega$ satisfying the ellipticity condition (H2),
\begin{equation}
\label{compsumint}
\EEo_0\left[ \sum_{n \in \N} f(Y_n) \ \1_{\{ T_{n+1} \le t \}} \right] \le C \EEo_0\left[ \int_0^{t+1} f(X_s) \ \d s \right].
\end{equation}
\end{lem}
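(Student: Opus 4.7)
\medskip
\noindent\textbf{Plan of proof.} The plan is to rewrite the integral on the right-hand side as a sum over the excursions of $X$ between consecutive jumps, and then to use the fact that each holding time $T_{n+1}-T_n$ is, conditionally on the embedded chain, an exponential of uniformly bounded rate, so that it spends a non-negligible amount of time at $Y_n$.

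More precisely, I would first write
$$
\int_0^{t+1} f(X_s)\,\d s \;=\; \sum_{n\in\N} f(Y_n)\,\bigl[(T_{n+1}\wedge(t+1)) - (T_n\wedge(t+1))\bigr],
$$
and observe that for each $n$ satisfying $T_n \le t$ one has $(T_n+1)\wedge(t+1) - T_n \ge 1$, so the contribution of this $n$ to the integral is bounded below by $f(Y_n)\,\bigl((T_{n+1}-T_n)\wedge 1\bigr)$. Discarding the indices with $T_n > t$ (which is allowed since $f \ge 0$) gives
$$
\int_0^{t+1} f(X_s)\,\d s \;\ge\; \sum_{n\in\N} f(Y_n)\,\bigl((T_{n+1}-T_n)\wedge 1\bigr)\,\1_{\{T_n\le t\}}.
$$

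Second, I would take $\EEo_0$ and condition on the filtration $\mcl{F}_n = \sigma(Y_0,\ldots,Y_n,T_0,\ldots,T_n)$, which makes $\{T_n\le t\}$ and $f(Y_n)$ measurable. Conditionally on $\mcl{F}_n$, the holding time $T_{n+1}-T_n$ is exponential with rate $\omega(Y_n):=\sum_{z\sim Y_n}\omega_{Y_n,z}$, which under assumption~(H2) lies in $[2d,2dM]$. A direct computation gives
$$
\EEo_0\bigl[(T_{n+1}-T_n)\wedge 1 \,\big|\, \mcl{F}_n\bigr] \;=\; \frac{1-e^{-\omega(Y_n)}}{\omega(Y_n)} \;\ge\; c,
$$
for some constant $c>0$ depending only on $d$ and $M$. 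Pulling this conditional lower bound out of the expectation yields
$$
\EEo_0\!\left[\int_0^{t+1} f(X_s)\,\d s\right]
\;\ge\; c \,\EEo_0\!\left[\sum_{n} f(Y_n)\,\1_{\{T_n\le t\}}\right]
\;\ge\; c \,\EEo_0\!\left[\sum_{n} f(Y_n)\,\1_{\{T_{n+1}\le t\}}\right],
$$
which is~(\ref{compsumint}) with $C=1/c$.

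There is no real obstacle here; the only point requiring a bit of care is to make sure that the time horizon is extended from $t$ to $t+1$ on the right-hand side precisely so that an index $n$ with $T_n\le t$ has at least one unit of time available at $Y_n$ inside $[0,t+1]$, which is what produces the uniform lower bound on the holding time contribution.
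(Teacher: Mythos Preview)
Your proof is correct and follows essentially the same route as the paper: rewrite the integral as a sum over holding intervals, bound each term from below by $f(Y_n)\bigl((T_{n+1}-T_n)\wedge 1\bigr)\1_{\{T_n\le t\}}$, then condition on $\mcl{F}_n$ and use ellipticity to bound the conditional expectation of $(T_{n+1}-T_n)\wedge 1$ away from zero. You are in fact slightly more explicit than the paper, giving the exact value $\frac{1-e^{-\omega(Y_n)}}{\omega(Y_n)}$ of the conditional expectation.
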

\begin{proof}
We can rewrite the right-hand side of (\ref{compsumint}) as
$$
\sum_{n \in \N} \EEo_0\left[ f(Y_n) \big(T_{n+1} \wedge (t+1) - T_n \wedge (t+1) \big)\right],
$$
where $a \wedge b = \min(a,b)$. This sum is larger than
$$
\sum_{n \in \N} \EEo_0\left[ f(Y_n) \big((T_{n+1} - T_n)\wedge 1 \big) \ \1_{\{ T_{n} \le t \}} \right].
$$
Let us write $\mcl{F}_n$ for the $\sigma$-algebra generated by $Y_0,\ldots, Y_n, T_0,\ldots, T_n$. The last sum can be rewritten as
$$
\sum_{n \in \N} \EEo_0\left[ f(Y_n) \EEo_0[(T_{n+1} - T_n)\wedge 1 \ | \ \mcl{F}_n] \ \1_{\{ T_{n} \le t \}} \right].
$$
Due to the ellipticity assumption on the environment, the conditional expectation $\EEo_0[(T_{n+1} - T_n)\wedge 1 \ | \ \mcl{F}_n]$ is uniformly bounded away from $0$. We have thus proved that, for some $C > 0$,
$$
C \EEo_0\left[ \int_0^{t+1} f(X_s) \ \d s \right] \ge \sum_{n \in \N} \EEo_0\left[ f(Y_n)  \ \1_{\{ T_{n} \le t \}} \right],
$$
an inequality which implies the lemma.
\end{proof}
\begin{proof}[Proof of estimate (\ref{controlJ}) of Proposition~\ref{controlmart}]
From inequality (\ref{deltadmu}) and Lemma~\ref{lemsumint}, we get that
\begin{equation}
\label{compstat}
\ov{\E}\left[ \sum_{0 \le s \le t} (\Delta M_\mu(s))^4 \right] \le C \int_0^{t+1} \ov{\E}\left[ d_\mu^4(\omega(s)) \right] \ \d s.
\end{equation}
Due to the stationarity of the environment viewed by the particle under $\ov{\P}$, the right-hand side of (\ref{compstat}) is in fact equal to $C (t+1) \E[d_\mu^4]$. Estimate (\ref{controlJ}) of Proposition~\ref{controlmart} then follows from part (ii) of Theorem~\ref{t:go}, taking $p = 4$.
\end{proof}
%
%
%
%
%
%
%
\section{Smallness of the remainder}
\label{s:rest}
\setcounter{equation}{0}
This section is devoted to the proof of Proposition~\ref{controlrest}. It uses a spectral decomposition of the infinitesimal generator of the environment viewed by the particle. Recall that $-\L$ is a positive and self-adjoint operator on $\LL^2(\P)$. One can thus define, for any function $f \in \LL^2(\P)$, the spectral measure of $-\L$ projected on the function~$f$, that we write $e_f$ and is such that, for any bounded continuous $\Psi : [0,+\infty) \to \R$,
$$
\E\left[ f \ \Psi(-\L) f \right] = \int \Psi(\lambda) \ \d e_f(\lambda).
$$
Here is what makes this spectral representation interesting for our purpose. On one hand, one can express the $\LL^2(\ov{\P})$ norm of $R_\mu(t)$ in terms of the spectral measure associated with the local drift $\mfk{d}$. On the other hand, we have some information on the behavior of this measure close to the edge of the spectrum. This behavior is described with precision in \cite[Theorem~5]{gm} (although results given there are not optimal), but here we need only a weaker statement, that is in fact given by the case $p = 2$ of part (ii) of Theorem~\ref{t:go}. 
\begin{proof}[Proof of Proposition~\ref{controlrest}]
The random variable $R_\mu(t)$, see its definition in (\ref{defRmu}), can be decomposed as the sum of 
$$
-\phi_\mu(\omega(t))+\phi_\mu(\omega(0)) \quad \text{ and } \quad \mu \int_0^t \phi_\mu(\omega(s)) \ \d s.
$$
Recall that the process $(\omega(t))_{t \ge 0}$ is reversible under $\ov{\P}$. Applying a time reversal changes the sign of the first of the above terms, while keeping the second unchanged. As a consequence, these two are orthogonal in $\LL^2(\ov{\P})$, and thus
\begin{equation}
\label{decompsquare}
\ov{\E}\left[(R_\mu(t))^2\right] = \ov{\E}\left[\left( \phi_\mu(\omega(t)) - \phi_\mu(\omega(0)) \right)^2\right] + \mu^2 \ \ov{\E}\left[\left( \int_0^t \phi_\mu(\omega(s)) \ \d s \right)^2\right].
\end{equation}
We begin by computing the first term on the right-hand side of (\ref{decompsquare}). Expanding the square and using the fact that $\P$ is an invariant measure for $(\omega(t))$, we obtain that it is equal to
\begin{equation}
\label{avantPt}
2\E[\phi_\mu] - 2 \ov{\E}[\phi_\mu(\omega(t)) \phi_\mu(\omega)].
\end{equation}
Let us define the image of $\phi_\mu$ by the semi-group associated with $\L$, as
$$
\phi_{\mu,t}(\omega) = \EEo_0[\phi_\mu(\omega(t))] = e^{t\L} \phi_\mu  \ (\omega).
$$
Then (\ref{avantPt}) becomes 
$$
2\E[\phi_\mu] - 2 {\E}[\phi_{\mu,t} \ \phi_\mu],
$$
and using the definition (\ref{defphimu}) of $\phi_\mu$, this can be rewritten as
\begin{equation}
\label{squarep1}
2 \int \frac{1-e^{-\lambda t}}{(\lambda + \mu)^2} \ \d e_\mfk{d}(\lambda).	
\end{equation}
Let us now turn to the second term on the right-hand side of (\ref{decompsquare}). By the computation we did in section~\ref{s:fluctqv}, we readily know that
$$
\ov{\E}\left[\left( \int_0^t \phi_\mu(\omega(s)) \ \d s \right)^2\right] = 2 \int_0^t (t-s) \E[\phi_{\mu,s} \ \phi_\mu] \ \d s,
$$
which can be rewritten in terms of the spectral measure as
\begin{equation}
\label{squarep2}
2 \int \int_0^t (t-s) \frac{e^{-\lambda s}}{(\lambda + \mu)^2} \ \d s \ \d e_\mfk{d}(\lambda) = 
2 \int \frac{e^{-\lambda t} - 1 + \lambda t}{\lambda^2(\lambda + \mu)^2} \  \d e_\mfk{d}(\lambda)
\end{equation}
Combining (\ref{squarep1}) and (\ref{squarep2}), we thus obtain
$$
\ov{\E}\left[(R_\mu(t))^2\right] = 2 \int \frac{1}{(\lambda + \mu)^2} \left[1 - e^{-\lambda t} + \mu^2 \ \frac{e^{-\lambda t} - 1 + \lambda t}{\lambda^2} \right] \ \d e_\mfk{d}(\lambda).
$$
Choosing $\mu = 1/t$, one can check that the term between square brackets in the above integral remains bounded, uniformly in $\lambda$ and $t$, and thus
$$
\ov{\E}\left[(R_{1/t}(t))^2\right] \le C \int \frac{1}{(\lambda + 1/t)^2} \ \d e_\mfk{d}(\lambda).
$$
To conclude the proof, it suffices to remark that this last integral is equal to $\E[(\phi_{1/t})^2]$, and use part (ii) of Theorem~\ref{t:go}.
\end{proof}

%
%
%
%
%
%
%
\section{In dimension one}
\label{s:dim1}
\setcounter{equation}{0}
For the one-dimensional case, the easiest route is to use the function $\chi : \Z \to \R$ defined by
\begin{equation}
\label{defchi}
\chi(0) = 0 \quad \text{ and } \quad \forall x \in \Z, \ \  \chi(x+1) - \chi(x)  = \frac{\E[1/\omega_e]^{-1}}{\omega_{x,x+1}} - 1.
\end{equation}
This definition ensures that the function $x \to x + \chi(x)$ is harmonic, with $\chi(x)$ small compared to $x$. Indeed, harmonicity follows from
$$
L^\omega(x \mapsto x + \chi(x))(z)  =  \omega_{z,z+1}(1+\chi(z+1) - \chi(z)) + \omega_{z,z-1}(-1+\chi(z-1) - \chi(z)) = 0.
$$
As a consequence, we can decompose $X_t$ as $M(t) + R(t)$, where $M(t) = X_t + \chi(X_t)$ is a martingale, and $R(t) = - \chi(X_t)$ is a small remainder. As in Proposition~\ref{p:quadvar}, one can show that
$$
\langle M \rangle_t = \int_0^t v(\omega(s)) \ \d s,
$$
where 
$$
v(\omega) = \E[1/\omega_e]^{-2} \left( \frac{1}{\omega_{0,1}} + \frac{1}{\omega_{0,-1}} \right).
$$
Letting $v_t(\omega) = \EEo_0[v(\omega(t))]$, we learn from \cite[Theorem~2.2]{vardecay} that
$$
\var[v_t] = O(t^{-1/2}).
$$
As a consequence, letting $\sigma^2 = \E[v]$ and following the computations of section~\ref{s:fluctqv}, we obtain that
\begin{equation}
\label{d1qv}
\ov{\E}\left[ \left( \frac{\langle M \rangle_t}{t}  - \sigma^2 \right)^2 \right] = O(t^{-1/2}).
\end{equation}

Due to our assumption that the conductances are uniformly bounded away from~$0$, the jumps of the function $x \mapsto x + \chi(x)$ are uniformly bounded. In order to prove that 
\begin{equation}
\label{d1jump}
\ov{\E}\left[\sum_{0 \le s \le t} (\Delta M(s))^4 \right] = O(t),
\end{equation}
it thus suffices to control the number of jumps of the random walk, which can be done as in section~\ref{s:jump} (or simply by stochastically dominating this number by a Poisson process). 

Estimates (\ref{d1qv}) and (\ref{d1jump}) together imply, via Theorem~\ref{bemart}, that
$$
\sup_{x \in \R} \  \left| \ov{\P} \left[M(t) \le \sigma x \sqrt{t} \right] - \Phi(x) \right| = O(t^{-1/10}).
$$
There remains to control the rest $R(t)$. Following the argument given in the end of section \ref{s:structure}, and in particular inequality (\ref{compunsens}), what we need to check is that
$$
\ov{\P}[|R(t)| \ge \psi(t) \sqrt{t}] = O\big( \psi(t) \big),
$$
where here $\psi(t) = t^{-1/10}$, and $R(t) = - \chi(X_t)$. We need some control on the growth of the function $\chi$. As $\chi$ is the sum of bounded and centered random variables, a classical large deviation bound (or a consequence of the more refined \cite[Theorem~XVI.7.1]{fel}) yields:
\begin{lem}
\label{lemgd}
For any $\eps \in (0,1/2)$, there exists $a > 0$ such that
$$
\P[|\chi(n)| \ge n^{1/2+\eps}] \le e^{-an^{2\eps}}.
$$
\end{lem}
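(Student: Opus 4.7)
The plan is very short: the estimate is a direct consequence of a standard concentration inequality for sums of bounded i.i.d.\ random variables.

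First I would unpack the definition (\ref{defchi}) to write
$$
\chi(n) = \sum_{x=0}^{n-1} \xi_x, \qquad \xi_x := \frac{\E[1/\omega_e]^{-1}}{\omega_{x,x+1}} - 1.
$$
Under hypotheses (H1)--(H2), the variables $(\xi_x)_{x \ge 0}$ are i.i.d., uniformly bounded (since $\omega_{x,x+1} \in [1,M]$ implies $1/\omega_{x,x+1} \in [1/M,1]$, so $\xi_x$ takes values in a fixed compact subset of $\R$), and centered, because $\E[\xi_x] = \E[1/\omega_e]^{-1}\,\E[1/\omega_e] - 1 = 0$.

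Second, I would apply Hoeffding's inequality (or equivalently a Chernoff bound; this is also the content of the classical Cramér large deviation theorem for bounded summands cited via Feller): there exists a constant $c > 0$, depending only on $M$ and $\E[1/\omega_e]$, such that for every $n \ge 1$ and every $r > 0$,
$$
\P\!\left[\,|\chi(n)| \ge r\,\right] \le 2 \exp\!\left(-c\,\frac{r^2}{n}\right).
$$
Choosing $r = n^{1/2+\eps}$ gives $r^2/n = n^{2\eps}$, and the stated bound follows by absorbing the factor $2$ into a smaller constant $a < c$ (valid for $n$ large enough, and then for all $n$ by adjusting $a$).

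There is no real obstacle here: the summands are i.i.d., bounded, and centered, which is precisely the setting where sub-Gaussian concentration at rate $r^2/n$ applies. The only thing to be mindful of is to verify the centering, which uses the specific normalization $\E[1/\omega_e]^{-1}$ built into (\ref{defchi}). The refinement mentioned in the paper via \cite[Theorem~XVI.7.1]{fel} would be relevant only if one wanted to identify the exact exponential rate; for the purposes of Lemma~\ref{lemgd} the crude Hoeffding bound suffices.
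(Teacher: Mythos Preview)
Your proof is correct and follows exactly the approach the paper indicates: the paper simply remarks that $\chi$ is a sum of bounded, centered i.i.d.\ random variables and invokes ``a classical large deviation bound (or a consequence of the more refined \cite[Theorem~XVI.7.1]{fel})'', which is precisely what your Hoeffding/Chernoff argument implements. Your write-up is in fact more explicit than the paper's one-line justification, and your verification of the centering via the normalization $\E[1/\omega_e]^{-1}$ is the only point that required any care.
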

As the conductances are bounded away from $0$, the increments of $\chi$ are uniformly bounded by a constant $m$. Hence, on the event $|R(t)| \ge \psi(t) \sqrt{t}$, one must have $|X(t)| \ge m^{-1} \psi(t) \sqrt{t}$. As a consequence, for any $\eps \in (0,1/2)$, one has
\begin{equation}
\label{decomppsisqrt}
\ov{\P}[R(t) \ge \psi(t) \sqrt{t}] \le \P[\exists n \ge m^{-1} \psi(t) \sqrt{t} : |\chi(n)| \ge n^{1/2+\eps}] + \ov{\P}[X_t^{1/2+\eps} \ge \psi(t) \sqrt{t}].
\end{equation}
The first term on the r.h.s.\ of (\ref{decomppsisqrt}) decays faster than any negative power of $t$ due to Lemma~\ref{lemgd}. As for the second term, one can bound it by
\begin{equation}
\label{fracovEX}
\frac{\ov{\E}[X_t^2]}{\big( \psi(t) \sqrt{t} \big)^{2/(1/2+\eps)}}.
\end{equation}
The numerator of (\ref{fracovEX}) grows linearly with $t$ (see \cite[Theorem~2.1]{masi}). It thus suffices to choose $\eps$ small enough to ensure that the fraction (\ref{fracovEX}) is $O\big(\psi(t)\big)$, and this finishes the proof of Theorem~\ref{main} for $d = 1$.
%
%
%
%
%
%
\appendix
\section{On the proof of Theorem~\ref{t:go}}
\label{a:go}
\setcounter{equation}{0}
Part (i) of Theorem~\ref{t:go} is a minor variation of \cite[Theorem~2.1]{go11a}. We describe here the necessary modifications. 
What in our notation is $w_\mu(\theta_x \ \omega)$ is 
$$
T^{-1} \phi_T(x)^2 + (\nabla \phi_T(x) + \xi) \cdot A(x) (\nabla \phi_T(x) + \xi)
$$
in the notation of \cite{go11a}, with $T = 1/\mu$. Taking $n = L$ and $\eta_L = \1_{B_L}/|B_L|$, what in our notation is 
$$
\E \left[ \left( \frac{S_n(\ov{w}_\mu)}{|B_n|} \right)^2 \right]
$$
becomes in their notation
$$
\mathrm{var} \left[ \int_{\Z^d} \left( T^{-1} \phi_T(x)^2 + (\nabla \phi_T(x) + \xi) \cdot A(x) (\nabla \phi_T(x) + \xi) \right) \eta_L(x) \ \d x \right]. 
$$
\cite[Theorem~2.1]{go11a} precisely gives information about the decay of this variance, but under the assumption that the gradient of the averaging function satisfies $\| \nabla \eta_L  \|_\infty \lesssim L^{-d-1}$, while we only have $\| \nabla \eta_L  \|_\infty \lesssim L^{-d}$ here. This difference is the reason why the exponents of decay differ by $1$ between Theorem~\ref{t:go} and the original result of \cite{go11a}. 

The assumption about the gradient is used only in steps 5, 6 and 7 of the proof of \cite[Theorem~2.1]{go11a}. In step 5 (p. 810), one needs to bound
\begin{equation}
\label{compint}
\int_{\Z^d} \int_{\Z^d} |\nabla^* \eta_L(x)| |\nabla^* \eta_L(x')| \int_{\Z^d}  h(z-x) h(z-x') \ \d z \d x \d x'.
\end{equation}
For $|\nabla^* \eta_L(x)|$ to be non zero, it must be that $x \in B_{L+1} \setminus B_{L-2} =: C_L$, so up to a constant, (\ref{compint}) is bounded by
\begin{multline*}
L^{-2d} \int_{x,x' \in C_L} \int_{z \in \Z^d}  h(z-x) h(z-x') \ \d z \d x \d x' \\
= L^{-2d} \int_{x,x' \in C_L} \int_{z' \in \Z^d}  h(z') h(z'+x-x') \ \d z' \d x \d x'.
\end{multline*}
Given $x,x' \in C_L$, it is clear that $x'-x$ falls in the box of size $2L+2$. Moreover, for any $y$ in this box, there can be at most $|C_L| \sim L^{d-1}$ pairs $(x,x') \in (C_L)^2$ such that $y = x' - x$. As a consequence, (\ref{compint}) is bounded by
$$
L^{-d-1} \int_{|y| \le 2L+2} \int_{z' \in \Z^d}  h(z') h(z'-y) \ \d z' \d y.
$$
This is, up to a factor $L$, the bound that is arrived at in \cite[p.~810]{go11a}. The rest of step 5 follows without change. The very same computations apply as well in steps 6 and 7, with the same loss of a factor $L$.
%
%
%
%
%
%
%
\section{Martingales associated with a Feller process}
\label{s:append}
\setcounter{equation}{0}

Let $S$ be a Polish space, and $\mathcal{C}(S)$ be the space of all real-valued continuous functions on $S$ that tend to $0$ at infinity, equipped with the uniform norm. Let~$D$ be the space of cadlag functions from $\R_+$ to $S$, that comes together with its product $\sigma$-algebra. We write $X = (X_t)_{t \ge 0}$ for the canonical process on $D$. A \emph{Feller process} consists of a collection of probability measures $(\PP_x)_{x \in S}$ on $D$ (expectations $(\EE_x)$), together with a right-continuous and adapted filtration $(\mathcal{F}_t)_{t \ge 0}$, such that 
\begin{itemize}
\item for any $x \in S$, $\PP_x[X_0 = x] = 1$,
\item for any $f \in \mathcal{C}(S)$ and any $t \ge 0$, the mapping $x \to \EE_x[f(X_t)]$ is in $\mathcal{C}(S)$,
\item the Markov property is satisfied.
\end{itemize}
This Feller process defines a probability semi-group $(P_t)_{t \ge 0}$ on $\mathcal{C}(S)$ by $P_t f (x) = \EE_x[f(X_t)]$. This semi-group can be used to define the infinitesimal generator $L$ of the process by
\begin{equation}
\label{defL}
Lf = \lim_{t \to 0} \frac{P_t f - f}{t},
\end{equation}
for any $f$ in the set
$$
\mathcal{D}(L) = \{ f \in \mathcal{C}(S) : \text{the limit in (\ref{defL}) exists in } \mathcal{C}(S) \}.
$$
If $f$ and $f^2$ are in $\mathcal{D}(L)$, we define the \emph{carré du champ} of $f$ as $\Gamma_f = L f^2 - 2 f L f$. 
\begin{prop}
\label{p:feller}
Let $f \in \mcl{D}(L)$. The process $M_f$ defined by
$$
M_f(t) = f(X_t) - f(X_0) - \int_0^t Lf(X_s) \ \d s
$$
is a martingale under $\PP_x$, for any $x \in S$. Moreover, if $f^2 \in \mcl{D}(L)$, its predictable quadratic variation is given by
\begin{equation}
\label{quadvar}
\langle M_f \rangle_t = \int_0^t \Gamma_f(X_s) \ \d s.
\end{equation}
\end{prop}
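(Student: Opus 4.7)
Plan for Proposition~\ref{p:feller}. Both claims reduce, in the standard way, to the Markov property combined with Dynkin's identity $P_t f = f + \int_0^t P_s L f \, ds$, which holds for $f \in \mcl{D}(L)$ as an immediate consequence of the semigroup property.

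The martingale claim is the simpler: for $0 \le s \le t$, I would condition $f(X_t)$ on $\mcl{F}_s$. By the Markov property,
\[
\EE_x[f(X_t) \mid \mcl{F}_s] = P_{t-s} f(X_s) = f(X_s) + \int_0^{t-s} P_u L f(X_s) \, du = f(X_s) + \EE_x\!\left[\int_s^t L f(X_u) \, du \,\Big|\, \mcl{F}_s\right],
\]
the last equality again by the Markov property and Fubini. Rearranging gives $\EE_x[M_f(t) \mid \mcl{F}_s] = M_f(s)$; integrability is immediate from $f, Lf \in \mcl{C}(S)$.

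For the quadratic variation, assuming $f^2 \in \mcl{D}(L)$, the key intermediate step is to compute $\EE_y[M_f(t)^2]$ in closed form. Writing $A(t) = \int_0^t Lf(X_s) \, ds$, expanding the square, and using the Markov property plus Fubini on the double integral $\EE_y[A(t)^2] = 2 \int_0^t \int_0^u \EE_y[Lf(X_s) Lf(X_u)] \, ds \, du$ and on $\EE_y[f(X_t) A(t)]$, the two identities $\int_0^{t-s} P_u Lf \, du = P_{t-s} f - f$ and $P_t(f^2) - f^2 = \int_0^t P_s L(f^2) \, ds$ cause every occurrence of $P_{t-s} f$ to cancel, leaving
\[
\EE_y[M_f(t)^2] = \int_0^t P_s\big(L(f^2) - 2 f L f\big)(y) \, ds = \EE_y\!\left[\int_0^t \Gamma_f(X_u) \, du\right].
\]

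To promote this fixed-time identity to the martingale property of $N(t) := M_f(t)^2 - \int_0^t \Gamma_f(X_s) \, ds$, I would apply the Markov property at time $s$: conditionally on $\mcl{F}_s$ the increment $M_f(t) - M_f(s)$ has the law of $M_f(t-s)$ started from $X_s$, while the martingale property of $M_f$ yields $\EE[M_f(t)^2 - M_f(s)^2 \mid \mcl{F}_s] = \EE[(M_f(t) - M_f(s))^2 \mid \mcl{F}_s]$. Applying the fixed-time identity inside the conditional expectation shows this equals $\EE[\int_s^t \Gamma_f(X_u) \, du \mid \mcl{F}_s]$, proving that $N$ is a martingale. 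Continuity of $t \mapsto \int_0^t \Gamma_f(X_s) \, ds$ makes it predictable, so it is indeed $\langle M_f \rangle$. The only mildly delicate part is the Fubini-plus-Kolmogorov juggling in the double time integral that produces the $-2\int_0^t P_s(fLf)(y) \, ds$ term; the rest is routine bookkeeping.
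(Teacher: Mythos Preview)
Your proposal is correct and takes a genuinely different route from the paper's own proof. For the martingale claim, both arguments are essentially the same (Markov property plus Dynkin's identity). For the quadratic variation, however, the paper proceeds \emph{infinitesimally}: it shows that for every $s < t$,
\[
\lim_{h \to 0} h^{-1}\,\EE_x\!\left[\td{M}(t+h) - \td{M}(t) \mid \mcl{F}_s\right] = 0,
\]
by reducing (via the martingale property of $M_f$ and right-continuity of $X$) to the local identity $\lim_{h \to 0^+} h^{-1} \EE_x[(M_f(h))^2] = \Gamma_f(x)$, and then concludes that $t \mapsto \EE_x[\td{M}(t) \mid \mcl{F}_s]$ has zero derivative on $[s,\infty)$, hence is constant. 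You instead work \emph{integrally}: you compute $\EE_y[M_f(t)^2]$ in closed form via Dynkin's formula $P_t g - g = \int_0^t P_s Lg\,ds$ applied to both $f$ and $f^2$, obtaining $\EE_y[M_f(t)^2] = \int_0^t P_s \Gamma_f(y)\,ds$ directly, and then lift this to the conditional statement by the Markov property. Your approach is slightly more computational but avoids the (mild) analytic step of inferring constancy from a vanishing derivative; the paper's approach is more local in spirit and requires only the short-time asymptotics rather than the full Dynkin integral identity for $f^2$. Both are standard folklore arguments and yield the same result with comparable effort.
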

\begin{proof}
The fact that $M_f$ is a martingale is well known, and is proved in \cite[Theorem~3.33]{lig}. The second affirmation certainly belongs to folklore, but I could not find a precise reference for it. 
Being continuous (and adapted), the process $t \mapsto \int_0^t \Gamma_f(X_s)  \d s$ is predictable. It is thus sufficient to check that the process $\td{M}$ defined by
$$
\td{M}(t) = M_f(t)^2 - \int_0^t \Gamma_f(X_s) \ \d s
$$
is a martingale. Recall that, due to our assumptions, the functions $f$, $L f$ and $L f^2$ are bounded, so there are no problems of integration. We will actually show that, for any $0\le s < t$,
\begin{equation}
\label{martlimit}
\lim_{h \to 0} h^{-1} \EE_x[\td{M}(t+h) - \td{M}(t) \ | \ \mathcal{F}_s] = 0.
\end{equation}
Let us first check that 
\begin{equation}
\lim_{h \to 0^+} h^{-1} \EE_x[\td{M}(h) - \td{M}(0)] = 0,
\end{equation}
which, using the fact that $M_f$ is itself a martingale and the right continuity of the process $X$, amounts to verify that
\begin{equation}
\label{limh}
\lim_{h \to 0^+} h^{-1} \EE_x[(M_f(h) - M_f(0))^2] = \Gamma_f(x).
\end{equation}
In order to verify (\ref{limh}), one can as well assume that $f(x) = 0$. In this case, the left hand side is equal to
$$
h^{-1} \EE_x\left[\left(f(X_h) - \int_0^h Lf(X_s) \ \d s\right)^2\right].
$$
We obtain (\ref{limh}) by developping the square and using the right continuity of the process $X$. 
Similarly, for any $h \ge 0$, we have
$$
\EE_x[\td{M}(t+h) - \td{M}(t) \ | \ \mathcal{F}_s] = \EE_x\left[(M_f(t+h) - M_f(t))^2 - \int_t^{t+h} \Gamma_f(X_u) \ \d u \ \Big| \ \mcl{F}_s\right],
$$
and the same reasoning proves the right limit of (\ref{martlimit}), including the case when $s = t$. For $s < t$, we need to check the left limit as well. The above argument can be kept unchanged provided $h \ge s-t$. We have thus shown that the function $t \mapsto \EE_x[\td{M}(t) \ | \ \mcl{F}_s]$ is differentiable and of null derivative on $(s,+\infty)$, and has null right derivative at $s$. It is thus a constant function on $[s,+ \infty)$.
\end{proof}

\end{document}